\newtheorem{theorem}{Theorem}[section]
\newtheorem{proposition}[theorem]{Proposition}
\newtheorem{corollary}[theorem]{Corollary}
\newtheorem{conjecture}[theorem]{Conjecture}
\newtheorem{definition}[theorem]{Definition}
\newtheorem{example}[theorem]{Example}
\newtheorem{lemma}[theorem]{Lemma}
\newtheorem{remark}[theorem]{Remark}
\newcommand{\asc}{{\rm asc}}
\newcommand{\bad}{{\rm bad}}
\newcommand{\des}{{\rm des}}
\newcommand{\exc}{{\rm exc}}
\newcommand{\fexc}{{\rm fexc}}
\newcommand{\fix}{{\rm fix}}
\newcommand{\Fix}{{\rm Fix}}
\newcommand{\Orb}{{\rm Orb}}
\newcommand{\sd}{{\rm sd}}
\newcommand{\aA}{{\mathcal A}}
\newcommand{\bB}{{\mathcal B}}
\newcommand{\dD}{{\mathcal D}}
\newcommand{\eE}{{\mathcal E}}
\newcommand{\fF}{{\mathcal F}}
\newcommand{\hH}{{\mathcal H}}
\newcommand{\iI}{{\mathcal I}}
\newcommand{\pP}{{\mathcal P}}
\newcommand{\lL}{{\mathcal L}}
\newcommand{\RR}{{\mathbb R}}
\newcommand{\fS}{{\mathfrak S}}
\newcommand{\NN}{{\mathbb N}}
\newcommand{\ZZ}{{\mathbb Z}}
\renewcommand{\to}{\rightarrow}
\newcommand{\sm}{{\smallsetminus}}
\begin{document}
\title[Real-rootedness of the Eulerian 
transformation]
{On the real-rootedness of the Eulerian 
transformation}

\author{Christos~A.~Athanasiadis}

\address{Department of Mathematics\\
National and Kapodistrian University of Athens\\
Panepistimioupolis\\ 15784 Athens, Greece}
\email{caath@math.uoa.gr}

\thanks{Research supported by the Hellenic 
Foundation for Research and Innovation (H.F.R.I.) 
under the `2nd Call for H.F.R.I. Research Projects 
to support Faculty Members \& Researchers' 
(Project Number: HFRI-FM20-04537).}
\thanks{ \textit{Mathematics Subject 
Classifications}: Primary: 05A15; 
                  Secondary: 05E45, 26C10}
\thanks{\textit{Key words and phrases}. 
Eulerian polynomial, real-rooted polynomial, 
barycentric subdivision, triangulation, 
$h$-polynomial, local $h$-polynomial, 
gamma-positivity.}

\begin{abstract}
The Eulerian transformation is the linear operator 
on polynomials in one variable with real coefficients 
which maps the powers of this variable to the 
corresponding Eulerian polynomials. The derangement 
transformation is defined similarly. Br\"and\'en and 
Jochemko have 
conjectured that the Eulerian transforms of a 
class of polynomials with nonnegative coefficients, 
which includes those having all their roots in the 
interval $[-1,0]$, have only real zeros. This 
conjecture is proven in this paper. More general 
transformations are introduced in the 
combinatorial-geometric context of uniform 
triangulations of simplicial complexes, where 
Eulerian and derangement transformations 
arise in the special case of barycentric 
subdivision, and are shown to have strong 
unimodality and gamma-positivity properties. 
General real-rootedness conjectures for these
transformations, which unify various results 
and conjectures in the literature, are also 
proposed.
\end{abstract}

\maketitle

\section{Introduction}
\label{sec:intro}
 
Eulerian polynomials form one of the most important 
and well studied families of polynomials in 
mathematics, playing a prominent role in combinatorics
and elsewhere; see, for instance, \cite{FSc70, Pet15} 
\cite[Section~1.4]{StaEC1}. The $n$th Eulerian 
polynomial is defined as
\[ A_n(x) \, = \sum_{w \in \fS_n} x^{\des(w)} 
          \, = \sum_{w \in \fS_n} x^{\exc(w)}, \]
where $\fS_n$ is the symmetric group of permutations 
of the set $[n] := \{1, 2,\dots,n\}$ and 
\begin{align*}
\des(w) & = \# \{ i \in [n-1] : w(i) > w(i+1) \} \\
\exc(w) & = \# \{ i \in [n-1] : w(i) > i \}
\end{align*}
is the descent and excedance number of $w \in \fS_n$,
respectively, with the convention $A_0(x) := 1$.
A well known result, often attributed to 
Frobenius~\cite{Fr10}, states that $A_n(x)$ has only 
real roots. Far less is known about the 
real-rootedness of linear combinations of Eulerian
polynomials. Following the historical approach to 
study linear transformations which preserve 
real-rootedness properties (see the discussions in 
 \cite[Section~7.7]{Bra15} \cite[Section~1]{BJ22}),
Br\"and\'en and Jochemko~\cite{BJ22} considered 
the linear operator $\aA^\circ: \RR[x] \to \RR[x]$, 
defined by setting

\begin{equation}  
\aA^\circ(x^n) = \begin{cases}
    1, & \text{if $n=0$} \\
    xA_n(x), & \text{if $n \ge 1$} \end{cases}  
\label{eq:def-eulerian}
\end{equation} 
for $n \in \NN := \{0, 1, 2,\dots\}$, and named 
it the \textit{Eulerian transformation} (the 
slight difference from~\cite{BJ22} in the notation 
adopted here will be explained by our discussion 
in the sequel). They disproved a conjecture of 
Brenti~\cite[p.~32]{Bre89}, stating that 
$\aA^\circ$ preserves the class of polynomials 
having only real and nonpositive roots, and 
conjectured~\cite[Conjecture~1]{BJ22} that 
$\aA^\circ(p(x))$ has only real roots for every 
polynomial $p(x)$ which can be written as a 
nonnegative linear combination of the 
polynomials $x^{n-k} (1+x)^k$ for $k \in \{0, 
1,\dots,n\}$ for some $n \in \NN$. This class, 
which we will denote by $\pP_n[x]$ throughout 
this paper, contains all polynomials of degree 
$n$ with nonnegative coefficients which have 
all their roots in the interval $[-1,0]$. To 
support their conjecture, Br\"and\'en and 
Jochemko showed~\cite[Theorem~3.2]{BJ22} 
that $\aA^\circ(p(x))$ has a unimodal symmetric 
decomposition with respect to $n$ for every 
$p(x) \in \pP_n[x]$; in particular, 
$\aA^\circ(p(x))$ is unimodal, 
with a peak at position $\lceil n/2 \rceil$ 
(undefined terminology on polynomials is 
explained in Section~\ref{sec:pre}). They also 
studied other subtle properties of the Eulerian 
transformation. 

This paper aims to prove 
\cite[Conjecture~1]{BJ22}, provide new 
combinatorial and geometric interpretations of 
the Eulerian transformation and place the latter 
in a much broader and natural 
combinatorial-geometric context. Our first main  
result proves the aforementioned conjecture 
of~\cite{BJ22} and strengthens 
\cite[Theorem~3.2]{BJ22}. The polynomial 
$\aA^\circ((1+x)^n)$ which appears in the 
statement is the $n$th binomial Eulerian 
polynomial; see Section~\ref{sec:euler} for 
more information.
\begin{theorem} \label{thm:main} 
For every $p(x) \in \pP_n[x]$:
\begin{itemize}
\itemsep=0pt
\item[(a)] 
The polynomial $\aA^\circ(p(x))$ has only real 
roots. Moreover, it interlaces $xA_n(x)$ and it 
is interlaced by $\aA^\circ((1+x)^n)$. 

\item[(b)] 
The polynomial $\aA^\circ(p(x))$ has a real-rooted 
and interlacing (in particular, unimodal and 
$\gamma$-positive) symmetric decomposition with 
respect to $n$.

\end{itemize}
\end{theorem}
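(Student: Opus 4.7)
The plan is to reduce everything to a structural statement about the images of basis elements. By definition, $\pP_n[x]$ is the nonnegative cone spanned by $B_k(x) := x^{n-k}(1+x)^k$ for $0 \le k \le n$; by linearity of $\aA^\circ$, the polynomial $\aA^\circ(p(x))$ is the corresponding nonnegative combination of $P_k(x) := \aA^\circ(B_k(x))$. A standard criterion---nonnegative combinations of an interlacing sequence of real-rooted polynomials are real-rooted, and lie between the smallest and largest members in the interlacing order---then reduces part (a) to showing that $(P_k)_{k=0}^n$ is an interlacing sequence with endpoints $P_0 = xA_n(x)$ and $P_n = \aA^\circ((1+x)^n)$.

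Expanding $B_k$ in the monomial basis gives
\[ P_k(x) \, = \, \sum_{j=0}^k \binom{k}{j}\, \aA^\circ(x^{n-k+j}), \]
where $\aA^\circ(x^m) = xA_m(x)$ for $m \ge 1$ and $\aA^\circ(1) = 1$. The family $(xA_m(x))_{m \ge 0}$ is itself an interlacing sequence, inheriting this from the classical interlacing of the Eulerian polynomials together with the fact that multiplication by $x$ preserves interlacing for polynomials with nonnegative coefficients; hence each $P_k$ is a nonnegative combination of members of an interlacing sequence and so is real-rooted. The harder task is to upgrade this to the full chain $P_0 \preceq P_1 \preceq \cdots \preceq P_n$. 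A plausible route is induction on $k$, exploiting the Pascal-type identity $B_{k+1}(x) - B_k(x) = x^{n-k-1}(1+x)^k$ together with closure of compatibility under nonnegative combinations. The main obstacle is identifying a common interleaver for the entire family; here I expect the broader geometric framework of the paper---in which $\aA^\circ((1+x)^n)$ is realized as the $h$-polynomial of the barycentric subdivision of a simplex, and the $P_k$ as $h$-polynomials of closely related uniform triangulations---to supply either the interleaver or a direct proof of the pairwise interlacings.

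For part (b), decompose $P_k(x) = a_k(x) + x b_k(x)$ as its unique symmetric decomposition with respect to $n$, so that $a_k$ is palindromic of center $n/2$ and $b_k$ of center $(n-1)/2$. By linearity, the symmetric decomposition of $\aA^\circ(p(x))$ is the corresponding nonnegative combination of the pairs $(a_k, b_k)$, so it suffices to verify: (i) each $a_k$ and each $b_k$ is real-rooted; (ii) the sequences $(a_k)_{k=0}^n$ and $(b_k)_{k=0}^n$ are each interlacing; and (iii) $a_k$ and $b_k$ are pairwise compatible, uniformly in $k$. Given (i)--(iii), nonnegative combinations inherit real-rootedness, mutual interlacing, and---via the standard equivalence between palindromic real-rootedness and $\gamma$-positivity---the $\gamma$-positive decomposition asserted in (b). The crux is (ii) and (iii); I would seek explicit expressions for $a_k$ and $b_k$ in terms of derangement polynomials $d_m(x)$ (known, after Brenti and Shareshian--Wachs, to be palindromic and real-rooted with real-rooted $\gamma$-polynomials), whence the required interlacings should descend from the well-understood analytic structure of the $d_m$ themselves.
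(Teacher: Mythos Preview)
Your reduction for part (a) is the right one: it suffices to show that $(P_k)_{0 \le k \le n}$, or equivalently the reciprocals $q_{n,k}(x) := \iI_n(P_k)$, form an interlacing sequence. But this is precisely the heart of the theorem, and you leave it open---the sentence ``here I expect the broader geometric framework of the paper \ldots\ to supply either the interleaver or a direct proof'' is a hope, not an argument. In fact the paper's proof of this step is \emph{not} geometric. It introduces a doubly-indexed auxiliary family
\[
q_{n,k,j}(x) \;=\! \sum_{\substack{w \in \fS_{n+1} \\ w^{-1}(1)=j+1}} (1+x)^{\fix_k(w)} x^{\exc(w)},
\]
establishes a battery of recurrences for these (Proposition~\ref{prop:qnmk}), and runs a nested induction on $(n,k)$ using Lemma~\ref{lem:interlace-rec}(b) to prove that $(q^\ast_{n,k,j})_{0 \le j \le n}$ is interlacing for every $(n,k)$. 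The interlacing of $(q_{n,k})_k$ then follows. The Pascal identity you mention is indeed $q_{n,k+1}=q_{n,k}+xq_{n-1,k}$, but to get $q_{n,k}\preceq q_{n,k+1}$ from it one needs $q_{n-1,k}\preceq q_{n,k}$, a cross-level interlacing that does not come for free and is exactly what the auxiliary $j$-family delivers (via $q_{n,k}=\sum_j q^\ast_{n-1,k+1,j}$ with $q^\ast_{n-1,k+1,0}=q_{n-1,k}$).

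For part (b), your plan is more elaborate than necessary and you have not verified its key steps. The paper derives (b) directly from (a) in a few lines: the chain
\[
A_n(x)\;\preceq\;\iI_n(\aA^\circ(p))\;\preceq\;\widetilde{A}_n(x)\;\preceq\;\aA^\circ(p)\;\preceq\;xA_n(x),
\]
together with $A_n\preceq xA_n$ and Lemma~\ref{lem:interlace-trans}, gives $\iI_n(\aA^\circ(p))\preceq \aA^\circ(p)$. Since $\aA^\circ(p)$ already has a nonnegative symmetric decomposition by \cite[Theorem~3.2]{BJ22}, \cite[Theorem~2.7]{BS21} then yields the real-rooted interlacing decomposition. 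No separate control of the sequences $(a_k)$ and $(b_k)$ is needed, and derangement polynomials play no role here.
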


For a simple nontrivial application to linear 
combinations of $A_n(x), A_{n-1}(x)$ and 
$A_{n-2}(x)$, see Example~\ref{ex:simple-cor}. 

The unimodality, $\gamma$-positivity and 
real-rootedness properties of 
Theorem~\ref{thm:main} do not apply exclusively 
to the Eulerian transformation. We propose one 
way to generalize these results as follows. Our 
motivation 
comes from the fact that the right-hand side of 
Equation~(\ref{eq:def-eulerian}) can be interpreted
as the interior $h$-polynomial of the barycentric 
subdivision of the $(n-1)$-dimensional simplex, 
denoted here by $\sigma_n$ (basic definitions and 
terminology on the face enumeration of simplicial 
complexes are explained in Section~\ref{sec:geom}). 
It seems natural to inquire about the behavior of 
the transformation obtained when barycentric 
subdivision is replaced by other types of 
triangulations. 

A suitable context 
is provided by the theory of uniform triangulations,
developed in~\cite{Ath22}. A triangulation $\Gamma$ 
of the simplex $\sigma_n$ is called \emph{uniform} 
if the $f$-vector of the restriction of $\Gamma$ to
a face of $\sigma_n$ depends only on the dimension 
of that face. One can then consider the 
$h$-polynomial $h_\fF (\sigma_m,x)$ and the interior 
$h$-polynomial $h^\circ_\fF(\sigma_m,x)$ of the 
restriction of $\Gamma$ to any $(m-1)$-dimensional 
face of $\sigma_n$ (where $\fF$ records some 
combinatorial data determined by $\Gamma$). We 
define the linear operator $\hH^\circ_\fF: \RR_n[x] 
\to \RR_n[x]$ by setting
\[ \hH^\circ_\fF (x^m) = h^\circ_\fF (\sigma_m,x) 
   \] 
for $m \in \{0, 1,\dots,n\}$, where $\RR_n[x]$ 
stands for the space of polynomials of degree 
at most $n$ with real coefficients. This map 
reduces to the restriction of the Eulerian 
transformation $\aA^\circ$ to $\RR_n[x]$ in the 
special case that $\Gamma$ is the barycentric 
subdivision of $\sigma_n$.
Following~\cite{Ath22+}, we say that a 
triangulation $\Gamma$ of the simplex $\sigma_n 
= 2^V$ is \emph{theta unimodal} (respectively, 
\emph{theta $\gamma$-positive}) if 
$\theta(\Gamma_F,x) := h(\Gamma_F,x) - h(\partial 
\Gamma_F,x)$ is unimodal (respectively, 
$\gamma$-positive) for every $F \subseteq V$, 
where $\Gamma_F$ is the restriction of $\Gamma$ 
to the face $F$ of $\sigma_n$. For uniform 
triangulations, this means that 
\[ \theta_\fF(\sigma_m,x) := h_\fF (\sigma_m,x) -
   h_\fF (\partial \sigma_m,x) \]
is unimodal (respectively, $\gamma$-positive) for 
every $m \in \{0, 1,\dots,n\}$, where $h_\fF 
(\partial \sigma_m,x)$ stands for the 
$h$-polynomial of the restriction of $\Gamma$ to 
the boundary complex of any $(m-1)$-dimensional 
face of $\sigma_n$. As discussed in 
\cite[Section~5]{Ath22+} (see 
\cite[Theorem~5.1]{Ath22+}), it follows from 
\cite[Theorem~50]{AY20} and the $g$-theorem for 
triangulations of spheres \cite{Ad18, APP21, PP20} 
that $\Gamma$ is theta unimodal whenever 
$\partial \Gamma_F$ is a 
vertex-induced subcomplex of $\Gamma_F$ for every 
$F \subseteq V$. Moreover, several classes of 
triangulations of the simplex with interesting 
enumerative combinatorics are known to be theta 
$\gamma$-positive (see Section~\ref{sec:uniform}).
For the barycentric subdivision $\theta_\fF
(\sigma_m,x)$ is identically zero, and hence 
trivially $\gamma$-positive, for $m \ge 1$. 

Our second main result significantly generalizes 
\cite[Theorem~3.2]{BJ22} and supports a more 
general conjecture than \cite[Conjecture~1]{BJ22} 
which can be stated in this context (see 
Conjecture~\ref{conj:main}).
\begin{theorem} \label{thm:gamma} 
Let $\Gamma$ be a uniform triangulation of the 
$(n-1)$-dimensional simplex. 
\begin{itemize}
\itemsep=0pt
\item[(a)] 
If $\Gamma$ is theta unimodal, then 
$\hH_\fF^\circ(p(x))$ has a unimodal symmetric 
decomposition with respect to $n$ for every $p(x) 
\in \pP_n[x]$.

\item[(b)] 
If $\Gamma$ is theta 
$\gamma$-positive, then $\hH_\fF^\circ(p(x))$ 
has a $\gamma$-positive symmetric decomposition 
with respect to $n$ for every $p(x) \in \pP_n[x]$.

\end{itemize}
\end{theorem}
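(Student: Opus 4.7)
The plan is to reduce, by linearity of $\hH^\circ_\fF$ and the fact that $\pP_n[x]$ is the nonnegative cone spanned by $\{x^{n-k}(1+x)^k\}_{k=0}^n$, to establishing the conclusion for each basis polynomial $p(x) = x^{n-k}(1+x)^k$. For each such $p$, the goal is to exhibit the symmetric decomposition of $\hH^\circ_\fF(p)$ with respect to $n$ as a sum of nonnegative unimodal (respectively $\gamma$-positive) pieces built from $\theta_\fF(\sigma_j,x)$ and $h_\fF(\partial\sigma_j,x)$.

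The central input is the identity
\[ h^\circ_\fF(\sigma_m,x) \, = \, \theta_\fF(\sigma_m,x) \, + \, x \cdot h_\fF(\partial\sigma_m,x), \qquad m \ge 1, \]
which arises from the standard face-enumeration relation $h(\Gamma,x) = h^\circ(\Gamma,x) + (1-x)\,h(\partial\Gamma,x)$ for a triangulation of a simplicial ball, rewritten via $\theta_\fF = h_\fF - h_\fF(\partial)$. The right-hand side is precisely the symmetric decomposition of $h^\circ_\fF(\sigma_m,x)$ with respect to $m$: $\theta_\fF(\sigma_m,x)$ is palindromic of centre $m/2$ and $h_\fF(\partial\sigma_m,x)$ is palindromic of centre $(m-1)/2$. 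Since centres of palindromic polynomials add under multiplication, the products $(1+x)^{n-m}\theta_\fF(\sigma_m,x)$ and $(1+x)^{n-m-1} h_\fF(\partial\sigma_m,x)$ are palindromic of centres $n/2$ and $(n-1)/2$ respectively, providing the natural building blocks for the $A$- and $B$-parts of a symmetric decomposition with respect to $n$.

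The main technical step is to prove an identity of the shape
\[ \hH^\circ_\fF \bigl( x^{n-k}(1+x)^k \bigr) \, = \, \sum_{m=0}^{n} \alpha_{k,m}\,(1+x)^{n-m}\theta_\fF(\sigma_m,x) \, + \, x \sum_{m=0}^{n-1} \beta_{k,m}\,(1+x)^{n-m-1} h_\fF(\partial\sigma_m,x) \]
with explicit nonnegative coefficients $\alpha_{k,m}, \beta_{k,m}$. I would derive this by expanding $\hH^\circ_\fF(x^{n-k}(1+x)^k) = \sum_{j=0}^k \binom{k}{j} h^\circ_\fF(\sigma_{n-k+j},x)$, applying the central identity term by term, and using the extraction formulas $A(x) = (x^{n+1}p(1/x) - p(x))/(x-1)$ and $B(x) = (p(x) - x^n p(1/x))/(x-1)$ for the symmetric decomposition together with the reciprocity $h^\circ_\fF(\sigma_m,x) = x^m h_\fF(\sigma_m,1/x)$, simplifying systematically. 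Granting such an identity, part (a) is immediate: $\theta_\fF(\sigma_m,x)$ is unimodal by hypothesis, $h_\fF(\partial\sigma_m,x)$ is unimodal by the $g$-theorem for simplicial spheres applied to the triangulation of $\partial\sigma_m$, and multiplication by a nonnegative power of $1+x$ preserves unimodality of same-centred palindromic polynomials. Part (b) follows similarly, with $\gamma$-positivity preserved under multiplication by $(1+x)$, and the $\gamma$-positivity of $h_\fF(\partial\sigma_m,x)$ extracted from the Stanley-type expansion $h_\fF(\partial\sigma_m,x) = \sum_{j=0}^{m-1}\binom{m}{j}\theta_\fF(\sigma_j,x)$ (valid in the uniform setting, where $\theta_\fF$ coincides with the local $h$-polynomial) together with a pairing argument on $j$ and $m-1-j$ that regroups the sum into $\gamma$-positive palindromic pieces.

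The principal obstacle is securing the nonnegativity of the coefficients $\alpha_{k,m}, \beta_{k,m}$: the naive application of the sym-decomp extraction formulas yields signed sums with delicate cancellations, already visible in small cases. A cleaner derivation may come from interpreting $\hH^\circ_\fF(x^{n-k}(1+x)^k)$ geometrically as the interior $h$-polynomial of a larger uniform triangulation---for instance, a join of $\Gamma$ with the boundary of a simplex---paralleling the binomial-Eulerian-polynomial geometry that governs the case $k=n$.
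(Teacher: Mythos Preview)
Your reduction to the basis $\{x^{n-k}(1+x)^k\}$ and the observation that $h^\circ_\fF(\sigma_m,x) = \theta_\fF(\sigma_m,x) + x\,h_\fF(\partial\sigma_m,x)$ is exactly the symmetric decomposition of $h^\circ_\fF(\sigma_m,x)$ with respect to $m$ are both correct and match the paper. The difficulty, which you flag but do not resolve, is that for $m < n$ this is \emph{not} the symmetric decomposition with respect to $n$: the pieces have centres $m/2$ and $(m-1)/2$, not $n/2$ and $(n-1)/2$, and padding them by powers of $(1+x)$ has to be earned, not assumed. Your proposed identity with nonnegative $\alpha_{k,m}, \beta_{k,m}$ is essentially the content of the theorem, and the ``signed sums with delicate cancellations'' you observe are real: nothing in your outline produces the required nonnegativity.

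Two concrete errors compound the gap. First, $\theta_\fF(\sigma_j,x)$ does \emph{not} coincide with the local $h$-polynomial $\ell_\fF(\sigma_j,x)$: already for barycentric subdivision $\theta_\fF(\sigma_j,x)=0$ for $j\ge1$, whereas $\ell_\fF(\sigma_j,x)=d_j(x)$. Second, the formula $h_\fF(\partial\sigma_m,x) = \sum_{j=0}^{m-1}\binom{m}{j}\theta_\fF(\sigma_j,x)$ is false; the correct identity (obtainable from Equation~\eqref{eq:Ath22}) carries extra Eulerian factors, namely $h_\fF(\partial\sigma_m,x) = \sum_{j=0}^{m-1}\binom{m}{j}\theta_\fF(\sigma_j,x)\,A_{m-j}(x)$. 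With this correction the $\gamma$-positivity of $h_\fF(\partial\sigma_m,x)$ does follow, but your pairing argument is moot.

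The paper resolves the nonnegativity issue by a different route. It introduces polynomials $\ell_{V,E}(\Gamma,x) = \sum_{E\subseteq F\subseteq V} (-1)^{|V\sm F|} h(\Gamma_F,x)$ interpolating between $h(\Gamma,x)$ and $\ell_V(\Gamma,x)$, proves the expansion $\ell_{V,E}(\Gamma,x) = \sum_{F\subseteq V}\theta(\Gamma_F,x)\, d_{|V\sm F|,\,|V\sm(E\cup F)|}(x)$ (Proposition~\ref{prop:local-VE}), and establishes independently that each $d_{n,k}(x)$ has a $\gamma$-positive symmetric decomposition (Proposition~\ref{prop:dnk}). The required $(1+x)$ factors then appear through the identity $\sum_{E\subseteq G\subseteq V} h^\circ(\Gamma_G,x) = \sum_{E\subseteq F\subseteq V} (1+x)^{n-|F|}\, x^{|F|}\ell_{F,E}(\Gamma_F,1/x)$ of Corollary~\ref{cor:local-VE}, obtained by a short inclusion--exclusion. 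The combinatorics of the derangement polynomials $d_{n,k}(x)$ --- entirely absent from your proposal --- is the missing key.
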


The structure and other results of this paper can 
be described as follows. Section~\ref{sec:pre}
provides basic definitions and background 
on real polynomials and their roots.
Theorem~\ref{thm:main} is proven in 
Section~\ref{sec:euler}, after the combinatorics 
of the polynomials $\aA^\circ 
(x^{n-k}(1+x)^k)$ is sufficiently developed. The 
method extends that followed by Br\"and\'en and 
Jochemko in~\cite[Section~4]{BJ22} to obtain 
various partial results (although this is only 
one out of several possible approaches one may 
follow to attack \cite[Conjecture~1]{BJ22}).  
Section~\ref{sec:derange} is concerned with a 
close relative of the Eulerian transformation, 
introduced and studied by Br\"and\'en and 
Solus~\cite[Section~3.2]{BS21}, termed here as 
the derangement transformation. 
Proposition~\ref{prop:dnk} provides a common 
generalization of the gamma-positivity of Eulerian 
and derangement polynomials; a result which easily 
implies Theorem~\ref{thm:gamma} in the crucial 
special case of the barycentric subdivision. An 
analogue of Theorem~\ref{thm:main} (b) for the 
derangement transformation 
(Corollary~\ref{thm:derange}) is also derived in 
Section~\ref{sec:derange} from 
Proposition~\ref{prop:dnk} and results 
of \cite[Section~3.2]{BS21}. 

Section~\ref{sec:geom} reviews basic enumerative 
combinatorics of triangulations (and uniform 
triangulations, in particular) of simplicial 
complexes. Given a uniform triangulation of the
simplex $\sigma_n$, it introduces linear 
transformations $\hH_\fF^\circ$ and $\lL_\fF$ 
which reduce to the Eulerian and derangement 
transformations, respectively, in the special 
case of barycentric subdivision and provides a
geometric interpretation of the polynomials 
$\hH_\fF^\circ (x^{n-k}(1+x)^k)$ (see 
Proposition~\ref{prop:geom}). 
Theorem~\ref{thm:gamma} and its analogue for 
$\lL_\fF$ (Corollary~\ref{cor:gamma}) is 
proven in Section~\ref{sec:uniform}. Essential 
ingredients of the proof are a family of 
polynomials interpolating between the 
$h$-polynomial and the local $h$-polynomial 
of a triangulation of a simplex (see 
Definition~\ref{def:local-VE}), properties of 
theta polynomials established in~\cite{Ath22+} 
and Proposition~\ref{prop:dnk}. 
Section~\ref{sec:roots} proposes 
generalizations of \cite[Conjecture~1]{BJ22} 
for the transformations $\hH_\fF^\circ$ and 
$\lL_\fF$ and discusses supporting evidence
and some further directions.

\section{Recollections of polynomials and their 
         roots}
\label{sec:pre}

This section explains basic background and terminology 
on real polynomials which will be useful in the 
following sections.
We recall that $\RR_n[x]$ stands for the space of 
polynomials $p(x) \in \RR[x]$ of degree at most $n$.
We will denote by $\iI_n(p(x))$ the reciprocal 
$x^n p(1/x)$ with respect to $n$ of a polynomial 
$p(x) \in \RR_n[x]$. A polynomial $p(x) = a_0 + a_1 x 
+ \cdots + a_n x^n \in \RR_n[x]$ is called
\begin{itemize}
\item[$\bullet$] 
  \emph{symmetric}, with center of symmetry $n/2$, if 
	$a_i = a_{n-i}$ for all $0 \le i \le n$,
\item[$\bullet$] 
  \emph{unimodal}, with a peak at position $k$, if $0 
	\le a_0 \le a_1 \le \cdots \le a_k \ge a_{k+1} \ge 
	\cdots \ge a_n \ge 0$,
\item[$\bullet$] 
  \emph{$\gamma$-positive}, with center of symmetry $n/2$,
	if $p(x) = \sum_{i=0}^{\lfloor n/2 \rfloor} \gamma_i 
	x^i (1+x)^{n-2i}$ for some nonnegative real numbers 
	$\gamma_0, \gamma_1,\dots,\gamma_{\lfloor n/2 \rfloor}$,
\item[$\bullet$] 
  \emph{real-rooted}, if every root of $p(x)$ is real, or 
	$p(x) \equiv 0$.
\end{itemize}
Every $\gamma$-positive polynomial is symmetric and 
unimodal and every real-rooted and symmetric polynomial
with nonnegative coefficients is $\gamma$-positive; see
\cite{Ath18, Bra15, Ga05, Sta89} \cite[Chapter~4]{Pet15} 
for more information on the connections among these 
concepts. Note that unimodal polynomials are assumed to
have nonnegative coefficients in this paper.

A real-rooted polynomial $p(x)$, with 
roots $\alpha_1 \ge \alpha_2 \ge \cdots$, is said to 
\emph{interlace} a real-rooted polynomial $q(x)$, with 
roots $\beta_1 \ge \beta_2 \ge \cdots$, if
\[ \cdots \le \alpha_2 \le \beta_2 \le \alpha_1 \le
   \beta_1. \]
We then write $p(x) \preceq q(x)$. By convention, 
the zero polynomial interlaces and is interlaced by 
every real-rooted polynomial and nonzero constant 
polynomials strictly interlace all polynomials of 
degree at most one. For real-rooted polynomials 
$p(x), q(x) \in \RR_n[x]$ with nonnegative 
coefficients, $p(x) \preceq q(x) \Rightarrow q(x) 
\preceq xp(x)$ and $p(x) \preceq q(x) \Rightarrow 
\iI_n(q(x)) \preceq \iI_n(p(x))$. If 
two or more real-rooted polynomials with positive 
leading coefficients interlace (respectively, are 
interlaced by) a real-rooted polynomial $p(x)$, 
then so does their sum. We will use these properties 
of interlacing in Section~\ref{sec:euler}; standard
references are~\cite[Section~7.8]{Bra15} 
\cite{Fi06}.  

A sequence $(p_0(x), p_1(x),\dots,p_m(x))$ of
real-rooted polynomials is called 
\emph{interlacing} if $p_i(x) \preceq p_j(x)$ for 
$0 \le i < j \le m$. The following 
lemmas will be applied in Section~\ref{sec:euler}. 
Part (b) of Lemma~\ref{lem:interlace-rec} is a 
special case of \cite[Theorem~2.4]{HZ19} (another 
version appears in the proof of 
\cite[Theorem~4.4]{BJ22}).
\begin{lemma} \label{lem:interlace-trans}
{\rm (\cite[Proposition~3.3]{Wa92})}
Let $p_0(x), p_1(x),\dots,p_m(x)$ be nonzero
real-rooted polynomials. If $p_{i-1}(x) \preceq 
p_i(x)$ for every $i \in [m]$ and $p_0(x) \preceq 
p_m(x)$, then $(p_0(x), p_1(x),\dots,p_m(x))$ is an 
interlacing sequence.
\end{lemma}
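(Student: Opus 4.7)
The plan is to prove the lemma by a direct comparison of roots. Assuming (via the stated conventions on zero and constant polynomials) that each $p_i$ has positive degree $n_i$, I denote the roots of $p_i$ in decreasing order by $\alpha_1^{(i)} \ge \alpha_2^{(i)} \ge \cdots \ge \alpha_{n_i}^{(i)}$. A first observation is that the hypotheses $p_{i-1} \preceq p_i$ and $p_0 \preceq p_m$ force $n_i \in \{n_{i-1}, n_{i-1}+1\}$ and $n_m \in \{n_0, n_0+1\}$, so the degree sequence is nondecreasing and every $n_i$ lies in $\{n_0, n_0+1\}$.

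Fix $i < j$ in $\{0,1,\dots,m\}$. To establish $p_i \preceq p_j$ it suffices to verify the two families of inequalities
\[ \alpha_k^{(i)} \,\le\, \alpha_k^{(j)} \quad (1 \le k \le n_i) \qquad \text{and} \qquad \alpha_{k+1}^{(j)} \,\le\, \alpha_k^{(i)} \quad (1 \le k \le n_j-1). \]
The first family is immediate by telescoping the consecutive interlacings: each $p_{\ell-1} \preceq p_\ell$ gives $\alpha_k^{(\ell-1)} \le \alpha_k^{(\ell)}$, and chaining these for $\ell = i+1,\dots,j$ yields the claim. For the second family I intend to combine three telescopings into the single chain
\[ \alpha_{k+1}^{(j)} \,\le\, \alpha_{k+1}^{(m)} \,\le\, \alpha_k^{(0)} \,\le\, \alpha_k^{(i)}, \]
whose outer two inequalities again come from telescoping the consecutive interlacings (from $j$ up to $m$, and from $0$ up to $i$), while the middle one is precisely the root-level content of the extra hypothesis $p_0 \preceq p_m$. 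The range constraint $k \le n_j - 1 \le n_0$, which follows from the degree bound $n_m \le n_0 + 1$ established in the setup, guarantees that every root appearing in this chain is defined.

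The middle inequality is the unique step in which the extra hypothesis $p_0 \preceq p_m$ enters the argument; without it the chain cannot be closed, and one can easily exhibit consecutive-interlacing sequences whose non-consecutive pairs fail to interlace, confirming that this hypothesis is essential. The main (and essentially only) care needed is the bookkeeping that the root indices stay within range when $n_m = n_0+1$, so that some $p_i$ of smaller degree does not possess an $(n_0+1)$st root; this is handled uniformly by the degree bound above, so I expect no substantive obstacle beyond the setup.
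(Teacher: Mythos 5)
The paper does not reproduce a proof of this lemma; it is cited to Wagner~\cite[Proposition~3.3]{Wa92}. Your direct root-comparison argument is correct and self-contained, so there is no paper proof to compare against in approach, but the substance checks out.

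To be concrete about why: writing $p_i \preceq p_j$ as the pair of families $\alpha_k^{(i)} \le \alpha_k^{(j)}$ and $\alpha_{k+1}^{(j)} \le \alpha_k^{(i)}$ is exactly the content of the definition in Section~2, so it does suffice to verify both. The first family telescopes along consecutive interlacings, as you say. For the second family your chain
$\alpha_{k+1}^{(j)} \le \alpha_{k+1}^{(m)} \le \alpha_k^{(0)} \le \alpha_k^{(i)}$
is valid: the two outer inequalities are telescoped consecutive interlacings and the middle one is precisely the relation $\alpha_{k+1}^{(m)} \le \alpha_k^{(0)}$ supplied by $p_0 \preceq p_m$. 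The degree bookkeeping is right: $p \preceq q$ for nonzero real-rooted polynomials forces $\deg p \le \deg q \le \deg p + 1$, hence the degree sequence is weakly increasing with total spread at most one, and the constraint $k \le n_j - 1 \le n_m - 1 \le n_0$ ensures every root in the chain exists. You also correctly identify $p_0 \preceq p_m$ as the step that cannot be dispensed with (e.g.\ $p_0 = x(x+2)$, $p_1 = (x-1)(x+1)$, $p_2 = (x-\tfrac12)(x-2)$ has $p_0 \preceq p_1 \preceq p_2$ but $p_0 \not\preceq p_2$).

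One cosmetic point: you set aside the constant-degree case by appeal to the conventions. This is fine, but it may be worth a sentence to note that if some $p_i$ is constant then, by the stated convention and the nondecreasing degree sequence together with $p_0 \preceq p_m$, every $p_j$ has degree at most one, in which case all the interlacing relations reduce to the total order on the (at most one) roots and the conclusion is immediate. This closes the loop rather than just deferring to ``conventions.''
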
 

\begin{lemma} \label{lem:interlace-rec} 
Let $(p_0(x), p_1(x),\dots,p_m(x))$ be an interlacing 
sequence of real-rooted polynomials with positive 
leading coefficients.
\begin{itemize}
\itemsep=0pt
\item[(a)]
Every nonnegative linear combination $p(x)$ of 
$p_0(x), p_1(x),\dots,p_m(x)$ is real-rooted. Moreover, 
$p_0(x) \preceq p(x) \preceq p_m(x)$. 

\item[(b)]
{\rm (cf. \cite[Theorem~2.4]{HZ19})}
The sequence $(q_0(x), q_1(x),\dots,q_{m+1}(x))$ 
defined by
\[ q_0(x) = \alpha(x) p_0(x) + \sum_{i=1}^m p_i(x), 
   \]
where $\alpha(x) = 1$ or $\alpha(x) = 1+x$, and
\[ q_k(x) = x \sum_{i=0}^{k-1} p_i(x) + 
             \sum_{i=k}^m p_i(x) \]
for $k \in \{1, 2,\dots,m+1\}$ is also interlacing.

\end{itemize}
\end{lemma}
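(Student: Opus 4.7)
The plan is to reduce everything to two facts recalled just before the lemma: the \emph{sum property} of interlacing (polynomials that all interlace, respectively are all interlaced by, a common real-rooted polynomial have their sum in the same relation) and the implication $p(x) \preceq q(x) \Rightarrow q(x) \preceq xp(x)$ valid for $p,q \in \RR_n[x]$ with nonnegative coefficients. Part~(a) is then immediate: since $(p_0,\dots,p_m)$ is interlacing, $p_0(x) \preceq p_i(x)$ and $p_i(x) \preceq p_m(x)$ for every $i$, so for any nonnegative $c_i$ two applications of the sum property to $p(x) = \sum_i c_i p_i(x)$ yield both its real-rootedness and the chain $p_0(x) \preceq p(x) \preceq p_m(x)$.

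For part~(b) I would apply Lemma~\ref{lem:interlace-trans} to $(q_0(x),q_1(x),\dots,q_{m+1}(x))$, reducing the task to the consecutive interlacings $q_{k-1}(x) \preceq q_k(x)$ for $k \in [m+1]$ together with the endpoint $q_0(x) \preceq q_{m+1}(x)$. For $k \ge 2$, write $q_{k-1} = R + p_{k-1}$ and $q_k = R + xp_{k-1}$, where $R(x) := x\sum_{i=0}^{k-2} p_i(x) + \sum_{i=k}^{m} p_i(x)$. The heart of the argument will be the pair of auxiliary interlacings
\[ p_{k-1}(x) \,\preceq\, R(x) \,\preceq\, xp_{k-1}(x), \]
each of which I would prove by splitting $R$ into its two subsums and combining, via the sum property, the local facts $p_{k-1} \preceq xp_l$ for $l \le k-2$ (obtained from $p_l \preceq p_{k-1}$ and the rule $p \preceq q \Rightarrow q \preceq xp$) together with $p_{k-1} \preceq p_l$ for $l \ge k$, for the first; and $xp_l \preceq xp_{k-1}$ for $l \le k-2$ together with $p_l \preceq xp_{k-1}$ for $l \ge k$, for the second. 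Two further applications of the sum property then yield $q_{k-1} \preceq R$ (from $p_{k-1} \preceq R$ and $R \preceq R$) and $q_{k-1} \preceq xp_{k-1}$ (from $p_{k-1} \preceq xp_{k-1}$ and $R \preceq xp_{k-1}$), and one more gives $q_{k-1} \preceq R + xp_{k-1} = q_k$. The case $k=1$ with $\alpha(x)=1$ is the same argument with the $x$-subsum of $R$ empty, while $k=1$ with $\alpha(x)=1+x$ is handled by writing $q_0 = p_0 + q_1$ and reducing to $p_0 \preceq q_1$, which follows again from $p_0 \preceq xp_0$ and $p_0 \preceq R$ via the sum property.

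The endpoint interlacing is comparatively easy: for $\alpha(x) = 1$, the identity $q_{m+1} = xq_0$ together with the standard $q_0 \preceq xq_0$ suffice; for $\alpha(x) = 1+x$, decompose $q_0$ as the $\alpha=1$ version plus $xp_0$, and note that both summands are interlaced by $q_{m+1}$ (the second because $p_0 \preceq \sum_i p_i$ by part~(a), hence $xp_0 \preceq x\sum_i p_i = q_{m+1}$), then invoke the sum property. The main obstacle I anticipate is precisely locating the auxiliary polynomial $R$ and establishing the pair $p_{k-1} \preceq R \preceq xp_{k-1}$: these cannot be obtained by any form of transitivity of $\preceq$ (which fails in general), and the trick is to engineer every step as a direct pairwise application of the sum property or of the rule $p \preceq q \Rightarrow q \preceq xp$ between individual $p_i$'s.
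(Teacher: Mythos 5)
The paper does not actually prove Lemma~\ref{lem:interlace-rec}: part (a) is stated as a standard fact about interlacing sequences, and part (b) is cited from \cite[Theorem~2.4]{HZ19} (with another version noted to appear in the proof of \cite[Theorem~4.4]{BJ22}). So your proposal is supplying an argument the paper omits, and there is no in-paper proof to compare against line by line.

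Your argument is correct, and it is exactly in the spirit of what the cited sources do. The reduction via Lemma~\ref{lem:interlace-trans} (Wagner) to consecutive interlacings plus one endpoint is the standard opening move, and your decomposition $q_{k-1} = R + p_{k-1}$, $q_k = R + x p_{k-1}$ with $R := x\sum_{i \le k-2} p_i + \sum_{i \ge k} p_i$ correctly isolates the ``pivot'' $p_{k-1}$. Each step you describe is indeed a direct pairwise use of the sum property (``if two or more real-rooted polynomials with positive leading coefficients interlace, respectively are interlaced by, a real-rooted polynomial, then so does their sum'') or of the rule $p \preceq q \Rightarrow q \preceq x p$, and the chain $p_{k-1} \preceq R \preceq x p_{k-1}$ feeds through the sum property to give $q_{k-1} \preceq R$, $q_{k-1} \preceq x p_{k-1}$, and then $q_{k-1} \preceq R + x p_{k-1} = q_k$. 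Your handling of the two $\alpha(x)$ cases at $k=1$ and of the endpoint $q_0 \preceq q_{m+1}$ (via $q_{m+1} = x q_0$ when $\alpha=1$, and the further split $q_0 = \tilde q_0 + x p_0$ when $\alpha = 1+x$) is also sound; the degenerate cases where $R$ vanishes are covered by the convention that the zero polynomial interlaces and is interlaced by everything.

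One small caveat worth flagging explicitly: the rule $p \preceq q \Rightarrow q \preceq x p$ is stated in the paper for polynomials with \emph{nonnegative coefficients}, which is stronger than the lemma's hypothesis of ``positive leading coefficients''. Your proof (like the lemma itself, and like the cited sources) implicitly assumes all roots are nonpositive, i.e.\ that the $p_i$ have nonnegative coefficients; without that, adding a root at $0$ need not sit at the top of the root sequence and the rule can fail. In every application in this paper the $p_i$ are $h$-polynomials, Eulerian polynomials, and the like, so this is harmless, but if you want the lemma to be self-contained you should state the nonnegativity hypothesis.
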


\begin{example} \label{ex:pnk} \rm 
Let
\begin{align} 
p_{n,k}(x) & = \sum_{w \in \fS_{n+1} : \, w(1) = k+1} 
                  x^{\des(w)} = 
							 \sum_{w \in \fS_{n+1} : \, w(n+1) = k+1} 
                  x^{\asc(w)} 	\label{eq:pnk-1} \\
           & = \sum_{w \in \fS_{n+1}: \, w^{-1}(1) = 
					           k+1} x^{\exc(w)} \label{eq:pnk-2}
\end{align}
for $k \in \{0, 1,\dots,n\}$, so that $p_{n,0}(x) = 
A_n(x)$ and $p_{n,n}(x) = xA_n(x)$. The polynomials 
$p_{n,k}(x)$ appeared in the work of Brenti and 
Welker~\cite{BW08} on $f$-vectors of barycentric 
subdivisions and, independently, in 
\cite[Section~2.2]{CGSW07}, and have been studied 
intensely since then; see \cite[Section~2]{Ath22+} 
\cite[Section~3]{BJM19} \cite[Example~7.8.8]{Bra15} 
and the references given there. They satisfy the 
recurrence 
\begin{equation} \label{eq:pnk-rec}
p_{n,k}(x) = x \sum_{i=0}^{k-1} p_{n-1,i}(x) + 
             \sum_{i=k}^{n-1} p_{n-1,i}(x) 
\end{equation}
for $k \in \{0, 1,\dots,n\}$. Hence, by 
Lemma~\ref{lem:interlace-rec} (b), 
$(p_{n,k}(x))_{0 \le k \le n}$ is an interlacing 
sequence of real-rooted polynomials for every $n \in 
\NN$.
\qed
\end{example}

Given $p(x) \in \RR_n[x]$, there exist unique 
symmetric polynomials $a(x) \in \RR_n[x]$ and $b(x)
\in \RR_{n-1}[x]$ with centers of symmetry $n/2$ and 
$(n-1)/2$, respectively, such that $p(x) = a(x) + 
xb(x)$. This expression is known as the 
\emph{symmetric decomposition} (or 
\emph{Stapledon decomposition}) of $p(x)$ with 
respect to $n$. Then, $p(x)$ is said to have a 
\emph{nonnegative} (respectively, \emph{unimodal}, 
\emph{$\gamma$-positive} or \emph{real-rooted}) 
\emph{symmetric decomposition} with respect to $n$ 
if $a(x)$ and $b(x)$ have nonnegative coefficients 
(respectively, are unimodal, $\gamma$-positive or
real-rooted); see \cite[Section~5]{Ath18} 
\cite{AT21, BS21} for more information about 
these concepts. A nonnegative, real-rooted symmetric 
decomposition $p(x) = a(x) + xb(x)$ is said to be 
\emph{interlacing} if $b(x) \preceq a(x)$. Then 
\cite[Theorem~2.7]{BS21}, $p(x)$ is 
real-rooted and is interlaced by each one of 
$a(x)$, $b(x)$ and $\iI_n (p(x))$. Moreover, 
$b(x) \preceq a(x) \Leftrightarrow a(x) \preceq 
p(x) \Leftrightarrow b(x) \preceq p(x) 
\Leftrightarrow \iI_n (p(x)) \preceq p(x)$, under 
the assumption that $p(x)$ has a nonnegative
symmetric decomposition with respect to $n$.

\section{Combinatorics of the Eulerian transformation}
\label{sec:euler}

This section studies the combinatorics of the 
Eulerian transformation and, especially, the 
polynomials $\aA^\circ (x^{n-k}(1+x)^k)$ for $k \in 
\{0, 1,\dots,n\}$, and proves Theorem~\ref{thm:main}. 
For convenience, we consider the polynomials 
$q_{n,k}(x)$ defined by
\begin{equation}  
\iI_n (q_{n,k}(x)) = \aA^\circ 
                        \left( x^{n-k}(1+x)^k \right)  
\label{eq:def-qnk}
\end{equation} 
for $k \in \{0, 1,\dots,n\}$ instead. They are
shown on Table~\ref{tab:qnk} for $n \le 4$.
We have $q_{n,0}(x) = A_n(x)$ and $q_{n,n}(x) 
= \widetilde{A}_n(x)$ for every $n \in \NN$, where
\[ \widetilde{A}_n(x) := \aA^\circ((1+x)^n) = 1 + x 
   \sum_{i=1}^n 
   {n \choose i} A_i(x) = \sum_{i=0}^n {n \choose i}
	 x^{n-i} A_i(x) \]
is the $n$th \emph{binomial Eulerian polynomial}.
This polynomial was shown to be symmetric and 
$\gamma$-positive, with center of symmetry $n/2$, 
in~\cite[Section~11]{PRW08} (see also 
\cite[Section~2.1]{Ath18} \cite[Section~3]{BJ22} 
\cite{SW20}) and real-rooted 
in~\cite[Section~3]{HZ19} (see also 
\cite[Section~4]{BJ22}). Moreover, as a special 
case of \cite[Theorem~4.5]{BJ22}, $A_n(x)$
interlaces $\widetilde{A}_n(x)$.
We will show that each $q_{n,k}(x)$ is real-rooted
and that the sequence 
$(q_{n,k}(x))_{0 \le k \le n}$ is interlacing 
for every $n \in \NN$.

{\scriptsize
\begin{table}[hptb]
\begin{center}
\begin{tabular}{| l || l | l | l | l | l ||} \hline
& $k=0$ & $k=1$ & $k=2$ & $k=3$ \\ \hline \hline
$n=0$   & 1 &  & & \\ 
         \hline
$n=1$   & 1 & $1+x$ & & \\ \hline
$n=2$  & $1+x$ & $1+2x$ & $1+3x+x^2$ & \\ \hline
$n=3$  & $1+4x+x^2$ & $1+5x+2x^2$ & $1+6x+4x^2$ & 
         $1+7x+7x^2+x^3$ \\ \hline
$n=4$  & $1+11x+11x^2+x^3$ & $1+12x+15x^2+2x^3$ & 
         $1+13x+20x^2+4x^3$ & $1+14x+26x^2+8x^3$ 
				 \\ \hline
\end{tabular}

\bigskip
\begin{tabular}{| l || l ||} \hline
& $k=4$ \\ \hline \hline
$n=0$  & \\ \hline
$n=1$  & \\ \hline
$n=2$  & \\ \hline
$n=3$  & \\ \hline
$n=4$  & $1+15x+33x^2+15x^3+x^4$ \\ \hline
\end{tabular}

\bigskip
\caption{The polynomials $q_{n,k}(x)$ for $n \le 4$.}
\label{tab:qnk}
\end{center}
\end{table}}

The following statement lists some recurrences 
and combinatorial formulas for the polynomials 
$q_{n,k}(x)$; a geometric interpretation will be 
given in Section~\ref{sec:geom}. 

\begin{proposition} \label{prop:qnk} 
\begin{itemize}
\itemsep=0pt
\item[{\rm (a)}]
The polynomials $q_{n,k}(x)$ satisfy the recurrence 
\[ q_{n,k+1}(x) = q_{n,k}(x) + xq_{n-1,k}(x) \] 
for $0 \le k < n$.

\item[{\rm (b)}]
We have
\begin{align} 
q_{n,k}(x) & = \sum_{i=0}^k {k \choose i} x^i 
               A_{n-i}(x) \label{eq:qnk-1} \\
           & = \sum_{w \in \fS_n} (1+x)^{\fix_k(w)} 
					     x^{\exc(w)} \label{eq:qnk-2} \\
           & = \sum_{i=0}^k {k \choose i}
               p_{n-i,k-i}(x) \label{eq:qnk-3}
\end{align}
for $0 \le k \le n$, where $\fix_k(w)$ is the number 
of fixed points of $w \in \fS_n$ not exceeding $k$
and $p_{n,k}(x)$ are the polynomials of 
Example~\ref{ex:pnk}.
\end{itemize}
\end{proposition}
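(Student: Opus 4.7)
The plan is to derive formula (\ref{eq:qnk-1}) directly from the definition of $q_{n,k}(x)$, obtain the recurrence (a) from it as an immediate corollary, prove (\ref{eq:qnk-2}) via a standard fixed-point decomposition of permutations, and prove (\ref{eq:qnk-3}) by showing its right-hand side satisfies the same recurrence as in (a) together with the obvious initial condition at $k=0$.

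For (\ref{eq:qnk-1}), I would apply the binomial theorem to write $\aA^\circ(x^{n-k}(1+x)^k) = \sum_{i=0}^k \binom{k}{i}\, \aA^\circ(x^{n-k+i})$, where each term equals $xA_{n-k+i}(x)$ as long as $n-k+i \ge 1$; the exceptional case $n=k$, $i=0$ contributes $\aA^\circ(1)=1$, which corresponds to the constant term of the binomial Eulerian polynomial $\widetilde{A}_n(x)$ as displayed in the excerpt. Since $A_m(x)$ is symmetric of degree $m-1$ for $m \ge 1$, the reciprocal satisfies $\iI_n(xA_m(x)) = x^{n-m}A_m(x)$; applying $\iI_n$ termwise and reindexing $j = k - i$ yields (\ref{eq:qnk-1}). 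Part (a) is then immediate: subtracting (\ref{eq:qnk-1}) at consecutive values of $k$ and invoking Pascal's rule $\binom{k+1}{i} - \binom{k}{i} = \binom{k}{i-1}$ produces exactly $x q_{n-1,k}(x)$.

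For (\ref{eq:qnk-2}), I would expand $(1+x)^{\fix_k(w)} = \sum_{S \subseteq \Fix(w) \cap [k]} x^{|S|}$ and switch the order of summation. For each fixed $S \subseteq [k]$, the permutations $w \in \fS_n$ with $S \subseteq \Fix(w)$ are parametrized by permutations of $[n] \sm S$, and the excedance statistic is preserved under the order-preserving identification $[n] \sm S \to [n - |S|]$; hence the inner sum equals $A_{n-|S|}(x)$. Grouping subsets by cardinality $|S|=i$ and comparing with (\ref{eq:qnk-1}) completes the proof.

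The main work is formula (\ref{eq:qnk-3}). Writing $r_{n,k}(x)$ for its right-hand side, one has $r_{n,0}(x) = p_{n,0}(x) = A_n(x) = q_{n,0}(x)$, so it suffices to show that $r_{n,k}(x)$ obeys the recurrence of part (a). Subtracting (\ref{eq:pnk-rec}) at indices $k+1$ and $k$ yields the key identity
\[ p_{n, k+1}(x) - p_{n, k}(x) = (x-1)\, p_{n-1, k}(x). \]
Now split $\binom{k+1}{i} = \binom{k}{i} + \binom{k}{i-1}$ inside $r_{n, k+1}(x)$ and reindex $j = i - 1$ in the second piece. Combining this with the identity above, the difference $r_{n, k+1}(x) - r_{n, k}(x)$ collapses to $x\, r_{n-1, k}(x)$, which is precisely the recurrence in (a); induction on $k$ then identifies $r_{n,k}(x)$ with $q_{n,k}(x)$. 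The main obstacle will be organizing the index shifts in this last step so that the various contributions telescope cleanly; once this bookkeeping is in order the computation is routine.
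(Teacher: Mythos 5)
Your argument is correct throughout, and all the reindexings you flag as potential bookkeeping hazards do in fact telescope as claimed. Here is the comparison with the paper's proof.

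For (\ref{eq:qnk-1}) you use the second of the two routes the paper explicitly offers (expand $(1+x)^k$ in the definition and apply linearity of $\aA^\circ$), and you handle the $m=0$ exceptional value of $\aA^\circ$ carefully; note that the identity $\iI_n(\aA^\circ(x^m)) = x^{n-m}A_m(x)$ in fact holds uniformly for all $m \ge 0$, since $\iI_n(1)=x^n=x^n A_0(x)$, so the exceptional case is less exceptional than your write-up suggests. For part (a), the paper goes the other way: it proves the recurrence directly from $x^{n-k-1}(1+x)^{k+1}=x^{n-k}(1+x)^k+x^{n-k-1}(1+x)^k$ and then deduces (\ref{eq:qnk-1}) by induction; you instead derive (a) from (\ref{eq:qnk-1}) via Pascal's rule. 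Both orders are fine. Your proof of (\ref{eq:qnk-2}) is essentially the paper's argument (the paper packages the subset $S\subseteq\Fix(w)\cap[k]$ as a set of black-colored fixed points, i.e. a decorated permutation, but the underlying bijection is the same).

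Where you genuinely diverge from the paper is (\ref{eq:qnk-3}). The paper substitutes $x^i=\sum_j\binom{i}{j}(x-1)^{i-j}$ into (\ref{eq:qnk-1}), uses the identity $\binom{k}{i}\binom{i}{j}=\binom{k}{j}\binom{k-j}{i-j}$, interchanges summations, and then invokes the closed form $p_{n,k}(x)=\sum_i\binom{k}{i}(x-1)^iA_{n-i}(x)$, which it in turn pulls from the Brenti--Welker generating-function identity. Your argument avoids that closed form entirely: you only need the recurrence (\ref{eq:pnk-rec}), from which the one-line consequence $p_{n,k+1}(x)-p_{n,k}(x)=(x-1)p_{n-1,k}(x)$ follows, and then you check that the right-hand side of (\ref{eq:qnk-3}) satisfies the same two-term recurrence and base case $k=0$ as $q_{n,k}(x)$. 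This is cleaner and more self-contained than what the paper does, since the recurrence (\ref{eq:pnk-rec}) is already stated in Example~\ref{ex:pnk} while the closed form for $p_{n,k}$ requires an extra citation. The only thing you should make explicit is the range check: when you apply the key identity with parameters $(n-i,\,k-i)$ for $0\le i\le k<n$, the needed inequality $n-i\ge1$ is automatic, so (\ref{eq:pnk-rec}) applies at every index.
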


\begin{proof}
For part (a) we note that $x^{n-k-1}(1+x)^{k+1} = 
x^{n-k}(1+x)^k + x^{n-k-1}(1+x)^k$, which implies 
that 

\begin{align*} 
x^n q_{n,k+1}(1/x) & = \aA^\circ 
    \left( x^{n-k-1}(1+x)^{k+1} \right) = 
		\aA^\circ \left( x^{n-k}(1+x)^k \right) + \,
		\aA^\circ \left( x^{n-k-1}(1+x)^k \right) 
\\ & = x^n q_{n,k}(1/x) + x^{n-1} q_{n-1,k}(1/x).
\end{align*} 

\medskip
Equation~(\ref{eq:qnk-1}) follows from the 
recurrence of part (a) by induction on $k$, or by 
expanding the binomial $(1+x)^k$ in the defining 
Equation~(\ref{eq:def-qnk}) for $q_{n,k}(x)$ and 
applying the linearity of $\aA^\circ$. 

To derive 
Equation~(\ref{eq:qnk-2}) from~(\ref{eq:qnk-1}) we 
consider (as in \cite[Section~3]{BJ22}) decorated 
permutations of $[n]$, meaning that some of their 
fixed points (possibly all or none) are colored 
black. We denote by $\fS_n^\ast$ the set of decorated 
permutations of $[n]$ and for $w \in \fS_n^\ast$
we denote by $\fix^\ast(w)$ and $\Fix^\ast(w)$ the 
number and the set of fixed points of $w$ which are 
colored black, respectively, and by $\exc(w)$ the 
excedance number of the (undecorated) permutation 
which corresponds to $w$. Then,  

\begin{align*} 
\sum_{w \in \fS_n} (1+x)^{\fix_k(w)} x^{\exc(w)} 
& = \sum_{w \in \fS_n^\ast : \, \Fix^\ast(w) 
    \subseteq [k]} x^{\fix^\ast(w) + \exc(w)} \\ & =  
		\sum_{i=0}^k {k \choose i} x^i 
	  \sum_{u \in \fS_{n-i}} x^{\exc(u)} \ = \ 
	\sum_{i=0}^k {k \choose i} x^i A_{n-i}(x).
\end{align*} 

To derive Equation~(\ref{eq:qnk-3}) 
from Equation~(\ref{eq:qnk-1}), we expand $x^i = 
\sum_{j=0}^i {i \choose j} (x-1)^{i-j}$ on the 
right-hand side of~(\ref{eq:qnk-1}), 
apply the identity ${k \choose i} {i \choose j} 
= {k \choose j} {k-j \choose i-j}$, change the 
order of summation and use the formula
\[ p_{n,k}(x) = \sum_{i=0}^k {k \choose i} 
   (x-1)^i A_{n-i}(x), \]
which is easily derived from the identity
\cite[Equation~(4)]{BW08}
\[ \sum_{m \ge 0} m^k (1+m)^{n-k} x^m = 
   \frac{p_{n,k}(x)}{(1-x)^{n+1}}. \]
We omit the details, which are straightforward.
\end{proof}

To prove the real-rootedness of $q_{n,k}(x)$, we
introduce the polynomials
\begin{equation} \label{eq:def-qnmk}
q_{n,k,j}(x) = \sum_{w \in \fS_{n+1}: \, 
               w^{-1}(1) = j+1} (1+x)^{\fix_k(w)} 
					     x^{\exc(w)} 
\end{equation}
for $k \in \{0, 1,\dots,n+1\}$ and $j \in 
\{0, 1\dots,n\}$, where $\fix_k(w)$ is as in 
Proposition~\ref{prop:qnk}. An application of 
the fundamental transformation 
\cite[Section~I.3]{FSc70} 
\cite[Section~1.3]{StaEC1}, obtained when 
each cycle of a permutation $w \in \fS_{n+1}$ 
is written with its smallest element last and 
cycles are arranged in increasing order of their
smallest element, yields that
\begin{align} \label{eq:def-qnmk-alt}
q_{n,k,j}(x) & = \sum_{w \in \fS_{n+1}: \, 
               w(1) = j+1} (1+x)^{\bad_k(w)} 
					     x^{\des(w)} \\
							\label{eq:qnk5}
q_{n,k}(x) &   = \sum_{w \in \fS_n} 
               (1+x)^{\bad_k(w)} x^{\des(w)}						
\end{align}
where, for $w \in \fS_n$, $\bad_k(w)$ is the 
number of indices $i \in [n]$ for which $w(i) \le 
k$ is a right-to-left minimum of $w$ (meaning that 
$w(i) \le w(j)$ for $i \le j \le n$) and either 
$i = 1$ or $w(i-1) < w(i)$. We also set 
\begin{equation} \label{eq:def-q*nmk}
q^\ast_{n,k,j}(x) = \begin{cases} {\displaystyle 
    \frac{ q_{n,k,j}(x)}{1+x}}, & 
		            \text{if $j=0$ and $k \ge 1$} \\
    q_{n,k,j}(x), & \text{otherwise} 
		\end{cases}
\end{equation}
for $k \in \{0, 1,\dots,n+1\}$ and $j \in \{0, 
1\dots,n\}$.
\begin{proposition} \label{prop:qnmk} 
The following formulas hold for the $q_{n,k,j}(x)$
and $q^\ast_{n,k,j}(x)$:
\begin{itemize}
\itemsep=0pt
\item[{\rm (a)}]
$q^\ast_{n,k,0}(x) = q_{n,k-1}(x)$ for $k \in 
\{1, 2,\dots,n+1\}$.

\item[{\rm (b)}]
\[ q_{n,k}(x) = \sum_{j=0}^{n-1} q_{n-1,k,j}(x) 
              = \sum_{j=0}^{n-1} 
							  q^\ast_{n-1,k+1,j}(x) \]
for $k \in \{0, 1,\dots,n-1\}$.

\item[{\rm (c)}]
$q_{n,0,j}(x) = q^\ast_{n,0,j}(x) = 
q^\ast_{n,1,j}(x) = p_{n,j}(x)$ for $j \in 
\{0, 1,\dots,n\}$.

\item[{\rm (d)}]
$q_{n,k,k}(x) = q_{n,k+1,k}(x)$ for 
$k \in \{1, 2,\dots,n\}$.

\item[{\rm (e)}]
$q_{n,k,j}(x) = q_{n,k-1,j}(x) + 
xq_{n-1,k-1,j-1}(x)$ for $2 \le k \le j \le n$.

\item[{\rm (f)}]
For $k \ge 1$,
\begin{equation} \label{eq:q*nmk-1}
q^\ast_{n,k,j}(x) = x \sum_{i=0}^{j-1} 
                    q^\ast_{n-1,k-1,i}(x) + 
\sum_{i=j}^{n-1} q^\ast_{n-1,k-1,i}(x) 
\end{equation}
for $j \in \{1, 2,\dots,k-1\}$ and  
\begin{equation} \label{eq:q*nmk-2}
q^\ast_{n,k,j}(x) = x \sum_{i=0}^{j-1} 
                    q^\ast_{n-1,k,i}(x) + 
\sum_{i=j}^{n-1} q^\ast_{n-1,k,i}(x) 
\end{equation}
for $j \in \{k, k+1,\dots,n\}$.

\item[{\rm (g)}]
$q^\ast_{n,k,n}(x) = xq_{n,k-1}(x)$ for 
$k \in \{1, 2,\dots,n\}$.

\end{itemize}
\end{proposition}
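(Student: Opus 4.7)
The plan is to treat the seven parts by exploiting the two combinatorial presentations of $q_{n,k,j}(x)$ provided in~\eqref{eq:def-qnmk} and~\eqref{eq:def-qnmk-alt}: the fixed-point/excedance form is best suited to part (e), while the descent/bad-ascent form obtained via the fundamental transformation is the more convenient setting for (a), (c), (d), and especially (f). I expect (f) to be the main obstacle; the remaining items will be either short direct observations or clean corollaries of (a), (b), and (f).

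For (c), the key observations are that $\bad_0(w) \equiv 0$ and that, for $w \in \fS_{n+1}$ with $w(1) = j+1 \geq 2$, the value $1$ lies at some position $i \geq 2$ where the ascent condition $w(i-1) < w(i) = 1$ automatically fails; hence $\bad_1(w) = 0$. For (a), the deletion-and-shift map $v(i) := w(i+1) - 1$ puts $\{w \in \fS_{n+1} : w(1) = 1\}$ in bijection with $\fS_n$ and satisfies $\des(w) = \des(v)$ together with $\bad_k(w) = 1 + \bad_{k-1}(v)$, so the factor $1+x$ from the forced contribution at position $1$ cancels on passing to $q^*$. For (d), when $w(1) = k+1$ the value $k+1$ appears only at position $1$, and that position is a right-to-left minimum only if $k = 0$; for $k \geq 1$ this forces $\bad_{k+1}(w) = \bad_k(w)$. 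For (e) I would switch to the fixed-point presentation: the difference $q_{n,k,j}(x) - q_{n,k-1,j}(x)$ isolates those $w$ with $w(k) = k$, and deleting this fixed point (which is legitimate since $k \leq j$ forces $k \neq j+1$) yields a statistic-preserving bijection with $\{w' \in \fS_n : (w')^{-1}(1) = j\}$. The first equality of (b) is the defining split on $w^{-1}(1)$; for the second, I would substitute $q^*_{n-1,k+1,0}(x) = q_{n-1,k}(x)$ (from (a)) and $q_{n-1,k+1,0}(x) = (1+x) q_{n-1,k}(x)$ into $\sum_{j=0}^{n-1} q^*_{n-1,k+1,j}(x)$, obtaining $q_{n,k+1}(x) - x q_{n-1,k}(x)$, which equals $q_{n,k}(x)$ by Proposition~\ref{prop:qnk}(a).

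The hard part will be (f). To $w \in \fS_{n+1}$ with $w(1) = j+1$ (and $j \geq 1$) I would associate $v \in \fS_n$ via the deletion/relabelling $v(i) := w(i+1) - [w(i+1) > j+1]$. The delicate book-keeping is a case analysis of the ascent condition $w(i-1) < w(i)$ expressed through $v(i-2)$ and $v(i-1)$ and their position relative to the removed value $j+1$. It produces $\des(w) = \des(v) + [v(1) \leq j]$ uniformly, together with
\[
  \bad_k(w) = \bad_k(v) - [v(1) = 1] \quad \text{if } j \geq k,
  \qquad
  \bad_k(w) = \bad_{k-1}(v) - [v(1) = 1] \quad \text{if } j \leq k-1,
\]
the dichotomy reflecting whether the value $k$ is itself shifted by the relabelling. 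Splitting $\sum_v$ according to whether $v(1) = 1$, $v(1) \in \{2,\ldots,j\}$, or $v(1) \in \{j+1,\ldots,n\}$, and invoking part (a) to identify the $v(1) = 1$ contribution with $q^*_{n-1,\bullet,0}(x)$ (so that the apparent $(1+x)^{-1}$ arising from $\bad_k(w) = \bad_\bullet(v) - 1$ is absorbed), one collects the three groups of terms into exactly the two recurrences asserted in (f). Finally, (g) will follow by setting $j = n$ in the second recurrence of (f): the second sum is empty, and the remaining factor $\sum_{i=0}^{n-1} q^*_{n-1,k,i}(x)$ equals $q_{n,k-1}(x)$ by part (b), giving $q^*_{n,k,n}(x) = x q_{n,k-1}(x)$.
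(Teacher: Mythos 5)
Your proposal is correct and follows essentially the same strategy as the paper, alternating between the excedance/$\fix$ presentation~\eqref{eq:def-qnmk} and the descent/$\bad$ presentation~\eqref{eq:def-qnmk-alt} of $q_{n,k,j}(x)$. The paper uses the $\fix$-form for (a), (c), (d), (e), (g) and refers to the $\bad$-form only to dispatch (f) as ``standard arguments''; you instead work almost entirely in the $\bad$-form for (a), (c), (d), (f), switching to the $\fix$-form only for (e). Both give the same identities, and your detailed write-up of (f) — the deletion/relabelling $v(i)=w(i+1)-[w(i+1)>j+1]$, the formula $\des(w)=\des(v)+[v(1)\le j]$, the dichotomy $\bad_k(w)=\bad_k(v)-[v(1)=1]$ versus $\bad_k(w)=\bad_{k-1}(v)-[v(1)=1]$ according to $j\ge k$ or $j<k$, and the case split on $v(1)$ — supplies exactly the book-keeping the paper omits; your observation that the correction $[v(1)=1]$ appears because a right-to-left minimum at position $1$ of $v$ forces $v(1)=1$ is the key point. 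The one place you genuinely diverge is (g): the paper computes $q^\ast_{n,k,n}(x)$ directly from~\eqref{eq:def-qnmk} using the auxiliary statistic $\fix'_k$ and the recurrence of Proposition~\ref{prop:qnk}(a), whereas you obtain it cleanly by specializing~\eqref{eq:q*nmk-2} at $j=n$ (the second sum vanishes) and then applying part (b); your derivation is shorter and avoids introducing a new statistic, at the cost of making (g) depend on (b) and (f). Both routes are correct.
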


\begin{proof}
Part (a) and the first equality of part (b) are 
immediate consequences of 
Equation~(\ref{eq:qnk-2}) and the defining 
equations~(\ref{eq:def-qnmk}) 
and~(\ref{eq:def-q*nmk}) of $q_{n,k,j}(x)$ and 
$q^\ast_{n,k,j}(x)$. For the second equality of 
part (b) we apply Equation~(\ref{eq:def-q*nmk}), 
the first equality of part (b), part (a) and the 
recurrence of Proposition~\ref{prop:qnk} (a), 
respectively, to get
\[ \sum_{j=0}^{n-1} q^\ast_{n-1,k+1,j}(x) = 
   \sum_{j=0}^{n-1} q_{n-1,k+1,j}(x) - x 
   q^\ast_{n-1,k+1,0} = q_{n,k+1}(x) - x q_{n-1,k}
	 (x) = q_{n,k}(x). \]

Similarly, part (c) follows from 
Equation~(\ref{eq:pnk-2}) and the definitions of 
$q_{n,k,j}(x)$ and $q^\ast_{n,k,j}(x)$ and part 
(d) follows from definition of $q_{n,k,j}(x)$, 
since $k+1$ cannot be a fixed point of a 
permutation $w \in \fS_{n+1}$ with $w^{-1}(1) = 
k+1$. To verify part (e), we split the right-hand
side of Equation~(\ref{eq:def-qnmk}) into 
two sums, running over permutations $w \in \fS_{n+1}$
with $w^{-1}(1) = j+1$ for which $k$ is a fixed 
point, or $k$ is not a fixed point, respectively. 
We then note that the first sum is equal to 
$(1+x) q_{n-1,k-1,j-1}(x)$ and the second to 
$q_{n,k-1,j}(x) - q_{n-1,k-1,j-1}(x)$.

Part (f) reduces to the 
recurrence~(\ref{eq:pnk-rec}) for the polynomials 
$p_{n,k}(x)$ for $k=1$ and follows from 
Equation~(\ref{eq:def-qnmk-alt}) by standard 
arguments for $k \ge 2$. For part (g) we denote 
by $\fix^\prime_k(w)$ the number of fixed points 
of $w$ in $\{2, 3,\dots,k\}$ and compute from 
Equation~(\ref{eq:def-qnmk}) that
\[ q^\ast_{n,k,n}(x) = x \sum_{w \in \fS_n} 
   (1+x)^{\fix^\prime_k(w)} x^{\exc(w)} = 
	 x \left( q_{n,k}(x) - xq_{n-1,k-1}(x) \right) 
	 = x q_{n,k-1}(x) \]
for $k \in \{1, 2,\dots,n\}$.
\end{proof}

\begin{proof}[Proof of Theorem~\ref{thm:main}] 
We claim that $(q_{n,k,j}(x))_{0 \le j \le n}$ 
is an interlacing sequence of real-rooted 
polynomials for all $n \in \NN$ and $k \in \{0, 
1,\dots,n+1\}$. 

Given the claim, 
Proposition~\ref{prop:qnmk} (b) implies that 
$q_{n,k}(x)$ is a real-rooted polynomial which 
is interlaced by $q^\ast_{n-1,k+1,0}(x) = 
q_{n-1,k}(x)$ for $k \in \{0, 1,\dots,n-1\}$. 
As a result, $q_{n,k}(x) \preceq 
x q_{n-1,k}(x)$ and hence $q_{n,k+1}(x) = 
q_{n,k}(x) + x q_{n-1,k}(x)$ is real-rooted and 
it is interlaced by $q_{n,k}(x)$ for every $k 
\in \{0, 1,\dots,n-1\}$. Since, as already 
mentioned, $q_{n,0}(x) = A_n(x)
\preceq \widetilde{A}_n(x) = q_{n,n}(x)$, it 
follows from Lemma~\ref{lem:interlace-trans} 
that $(q_{n,k}(x))_{0 \le k \le n}$ is an 
interlacing sequence of real-rooted polynomials 
for every $n \in \NN$. Therefore, by 
Lemma~\ref{lem:interlace-rec} (a), for every 
polynomial $p(x) = \sum_{k=0}^n c_k x^{n-k} 
(1+x)^k$ with $c_0, c_1,\dots,c_n \ge 0$,
\[ \iI_n (\aA^\circ (p(x))) = \sum_{k=0}^n c_k
   (q_{n,k}(x)) \]
is a real-rooted polynomial such that 
\[ A_n(x) = q_{n,0}(x) \preceq \iI_n (\aA^\circ 
   (p(x))) \preceq q_{n,n}(x) = \widetilde{A}_n(x). 
	 \] 
Since $A_n(x)$ and $\widetilde{A}_n(x)$ are 
symmetric, with centers of symmetry $(n-1)/2$ and 
$n/2$, respectively, this implies that 
$\widetilde{A}_n(x) \preceq \aA^\circ (p(x)) 
\preceq xA_n(x)$ and proves part (a). Given that 
$A_n(x) \preceq xA_n(x)$, the interlacing 
relations 
\[ A_n(x) \preceq \iI_n (\aA^\circ(p(x))) 
   \preceq \widetilde{A}_n(x) \preceq \aA^\circ 
	 (p(x)) \preceq xA_n(x) \]
and Lemma~\ref{lem:interlace-trans} imply that 
$\iI_n (\aA^\circ(p(x))) \preceq \aA^\circ(p(x))$.
Since $\aA^\circ(p(x))$ is already known to have
a nonnegative (even unimodal) symmetric 
decomposition with respect to $n$ 
\cite[Theorem~3.2]{BJ22}, the latter interlacing
relation and \cite[Theorem~2.7]{BS21} imply part
(b). 

We now prove the claim (the proof reduces to part 
of the proof of \cite[Theorem~4.4]{BJ22} in the
special case $k=n+1$). Proceeding by 
induction on $n$ and $k$, we assume that $n \ge 1$
and $k \in \{0, 1,\dots,n+1\}$ and that the result 
holds for all pairs which are lexicographically 
smaller than $(n,k)$. By Proposition~\ref{prop:qnmk} 
(c), for $k \in \{0, 1\}$ the claim reduces to the 
fact that $(p_{n,j}(x))_{0 \le j \le n}$ is an 
interlacing sequence of real-rooted polynomials for 
every $n \in \NN$ (see Example~\ref{ex:pnk}). 
Suppose that $k \ge 2$. By parts (a) and (b) of 
Proposition~\ref{prop:qnmk} we have
\[ q^\ast_{n,k,0}(x) = q_{n,k-1}(x) = 
           \sum_{j=0}^{n-1} q_{n-1,k-1,j}(x) 
= (1+x) q^\ast_{n-1,k-1,0}(x) + 
  \sum_{j=1}^{n-1} q^\ast_{n-1,k-1,j}(x). \]
Applying Lemma~\ref{lem:interlace-rec} (b) and 
the induction hypothesis to this expression, 
combined with those of 
Proposition~\ref{prop:qnmk} (f), we conclude 
that $(q^\ast_{n,k,j}(x))_{0 \le j \le k-1}$ 
and $(q^\ast_{n,k,j}(x))_{k \le j \le n}$ are 
interlacing sequences of real-rooted polynomials. 
Since the second sequence is empty for $k=n+1$, 
the claim holds in this case. Thus, we may 
assume that $2 \le k \le n$. By 
Lemma~\ref{lem:interlace-trans}, it suffices to 
prove that $q^\ast_{n,k,k-1}(x) \preceq 
q^\ast_{n,k,k}(x)$ and $q^\ast_{n,k,0}(x) 
\preceq q^\ast_{n,k,n}(x)$. The latter holds 
because $q^\ast_{n,k,n}(x) = x q^\ast_{n,k,0}
(x)$ by parts (a) and (g) of 
Proposition~\ref{prop:qnmk}. For the former, 
recall from Proposition~\ref{prop:qnmk} (e) 
that 
\[ q^\ast_{n,k,k}(x) = q^\ast_{n,k-1,k}(x) + x 
                   q^\ast_{n-1,k-1,k-1}(x). \]
By Proposition~\ref{prop:qnmk} (d) and our 
induction hypothesis, $q^\ast_{n,k,k-1}(x) = 
q^\ast_{n,k-1,k-1}(x) \preceq q^\ast_{n,k-1,k}(x)$. 
Moreover, since 
$(q_{n-1,k-1,j}(x))_{0 \le j \le n}$ is interlacing 
by induction, each term on the right-hand side of 
Equation~(\ref{eq:q*nmk-1}) for $j=k-1$ interlaces 
$xq^\ast_{n-1,k-1,k-1}(x)$ and hence so does their  
sum $q^\ast_{n,k,k-1}(x)$. As a result, 
\[ q^\ast_{n,k,k-1}(x) \preceq q^\ast_{n,k-1,k}(x) 
   + x q^\ast_{n-1,k-1,k-1}(x) = q^\ast_{n,k,k}(x) 
	\]
and the proof follows. 
\end{proof}

\begin{example} \label{ex:simple-cor} \rm 
Applying Theorem~\ref{thm:main} to $p(x) = c_0 x^n 
+ c_1 x^{n-1}(1+x) + c_2 x^{n-2}(1+x)^2$ shows that 
$a A_n(x) + b A_{n-1}(x) + c A_{n-2}(x)$ is 
real-rooted whenever $a \ge b-c \ge c \ge 0$.
Moreover, it has a real-rooted and interlacing 
symmetric decomposition with respect to $n$.
\qed
\end{example}

\section{Combinatorics of the derangement 
         transformation}
\label{sec:derange}

The \emph{derangement transformation} was 
introduced and studied by Br\"and\'en and 
Solus~\cite[Section~3.2]{BS21}; it is the linear 
map $\dD: \RR[x] \to \RR[x]$ defined by setting 
$\dD(x^n) = d_n(x)$ for $n \in \NN$, where 
\[ d_n(x) = \sum_{i=0}^n \, (-1)^i {n \choose i} 
   A_{n-i}(x) = \sum_{w \in \fS_n : \, \Fix(w) 
	 = \varnothing} x^{\exc(w)} \]
is the \emph{$n$th derangement polynomial} and 
$\Fix(w)$ is the set of fixed points of a 
permutation $w \in \fS_n$. This polynomial is 
known to be $\gamma$-positive (in particular, 
symmetric), with center of symmetry $n/2$, and 
real-rooted; see \cite[Section~2.1.4]{Ath18} 
\cite[Section~3.2]{BS21} \cite[Section~3.1]{HZ19}
and references therein. 

As shown in~\cite[Corollary~3.7]{BS21}, the 
derangement transformation satisfies the 
analogue of part (a) of Theorem~\ref{thm:main}.  
This section shows that the reciprocals of the 
polynomials 
\begin{equation}  
d_{n,k}(x) = \dD \left( x^k(1+x)^{n-k} \right)  
\label{eq:def-dnk}
\end{equation} 
have $\gamma$-positive (in fact, real-rooted 
and interlacing) symmetric decompositions with 
respect to $n$, for $k \in \{0, 1,\dots,n\}$, 
a property that will be one of the ingredients 
of the proof of 
Theorem~\ref{thm:gamma}, and confirms the 
analogue of part (b) of Theorem~\ref{thm:main} 
for $\dD$ (see Corollary~\ref{thm:derange}). 
The polynomials $d_{n,k}(x)$ are shown on 
Table~\ref{tab:dnk} for $n \le 4$. 

{\scriptsize
\begin{table}[hptb]
\begin{center}
\begin{tabular}{| l || l | l | l | l | l | l ||} \hline
& $k=0$ & $k=1$ & $k=2$ & $k=3$ & $k=4$ \\ \hline \hline
$n=0$   & 1 &  & & & \\ \hline
$n=1$   & 1 & 0 & & & \\ \hline
$n=2$  & $1+x$ & $x$ & $x$ & & \\ \hline
$n=3$  & $1+4x+x^2$ & $3x+x^2$ & $2x+x^2$ & 
         $x+x^2$ & \\ \hline
$n=4$  & $1+11x+11x^2+x^3$ & $7x+10x^2+x^3$ & 
         $4x+9x^2+x^3$ & $2x+8x^2+x^3$ & $x+7x^2+x^3$
				 \\ \hline
\end{tabular}

\bigskip
\caption{The polynomials $d_{n,k}(x)$ for $n \le 4$.}
\label{tab:dnk}
\end{center}
\end{table}}

Part (b) of the following statement, which may be  
of independent interest, shows that $d_{n,k}(x)$ 
can be written as a sum of two $\gamma$-positive 
polynomials with centers of symmetry $n/2$ and 
$(n-1)/2$. For $k \in \{0, n\}$, it reduces to 
known expressions (see, for instance, 
\cite[Theorems~2.1 and~2.13]{Ath18}) for $A_n(x)
= d_{n,0}(x)$ and $d_n(x) = d_{n,n}(x)$ which 
demonstrate their $\gamma$-positivity. The proof 
extends the proof of $\gamma$-positivity of 
$d_n(x)$, given in \cite[Section~4]{AS12}. We 
recall that a \emph{decreasing run} of a 
permutation $w \in \fS_n$ is defined as a 
maximal set of integers of the form 
$\{a, a+1,\dots,b\} \subseteq [n]$ such that 
$w(a) > w(a+1) > \cdots > w(b)$.

\begin{proposition} \label{prop:dnk} 
For every positive integer $n$ and every $k \in \{0, 
1,\dots,n\}$:

\begin{itemize}
\itemsep=0pt
\item[{\rm (a)}]
\[ d_{n,k}(x) = \sum_{i=0}^k (-1)^i {k \choose i} 
A_{n-i}(x) = \sum_{w \in \fS_n : \, \Fix(w) 
   \subseteq [n-k]} x^{\exc(w)}, \]

\item[{\rm (b)}]
\[ d_{n,k}(x) = \sum_{i=0}^{\lfloor n/2 \rfloor} 
                \xi^+_{n,k,i} \, x^i (1+x)^{n-2i} 
                \ + 
                \sum_{i=0}^{\lfloor (n-1)/2 \rfloor} 
							\xi^-_{n,k,i} \, x^i (1+x)^{n-1-2i}, 
\]
where 

\medskip
\begin{itemize}
\itemsep=0pt
\item[$\bullet$] 
$\xi^+_{n,k,i}$ is equal to the number of permutations 
$w \in \fS_n$ with $w(1) > n-k$ which have $i$ 
decreasing runs and none of size one,
 
\item[$\bullet$] 
$\xi^-_{n,k,i}$ is equal to the number of permutations 
$w \in \fS_n$ with $w(1) \le n-k$ which have $i$ 
decreasing runs and none, except possibly the 
first, of size one.
\end{itemize}

\item[{\rm (c)}]
The polynomial $\iI_n (d_{n,k}(x))$ has a 
$\gamma$-positive symmetric decomposition with 
respect to $n$.
\end{itemize}
\end{proposition}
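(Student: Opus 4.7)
For part (a), the strategy is to show that both sides of each equality agree with the common combinatorial quantity $\sum_{w \in \fS_n : \Fix(w) \subseteq [n-k]} x^{\exc(w)}$. On the one hand, expanding $x^k(1+x)^{n-k}$ by the binomial theorem and applying linearity of $\dD$ gives $\sum_{m=k}^n \binom{n-k}{m-k} d_m(x)$; this coincides with the combinatorial sum after partitioning by the fixed-point set $F \subseteq [n-k]$ and noting that the order-preserving relabeling $[n] \sm F \to [n-|F|]$ preserves excedance counts. On the other hand, inclusion-exclusion on the forbidden fixed-point positions in $\{n-k+1,\dots,n\}$, combined with the same relabeling invariance, identifies the combinatorial sum with $\sum_{T \subseteq \{n-k+1,\dots,n\}} (-1)^{|T|} A_{n-|T|}(x) = \sum_{i=0}^k (-1)^i \binom{k}{i} A_{n-i}(x)$.

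For part (b), which is the heart of the proposition, the plan is to extend the proof of the $\gamma$-positivity of $d_n(x) = d_{n,n}(x)$ given in \cite[Section~4]{AS12}. That proof constructs a Foata-Strehl-style valley-hopping action on $\fS_n$ and describes orbit representatives combinatorially in terms of decreasing-run structure. I would adapt the action to the more general setting, refining the bookkeeping so that orbits split into two families according to the relationship between the representative $w_0$ and $n-k$: orbits with $w_0(1) > n-k$ have one additional modifiable position (contributing the $\xi^+$ gamma-factors $x^i(1+x)^{n-2i}$, with representatives characterized by $i$ decreasing runs all of size at least two), while orbits with $w_0(1) \le n-k$ have position $1$ constrained (contributing the $\xi^-$ gamma-factors $x^i(1+x)^{n-1-2i}$, with representatives characterized by $i$ decreasing runs all of size at least two except possibly the first). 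Summing the orbit contributions and reconciling with the combinatorial sum for $d_{n,k}(x)$ yields the claimed decomposition.

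Part (c) follows directly from (b). Writing the decomposition of (b) as $d_{n,k}(x) = A(x) + B(x)$, where $A$ is the $\xi^+$-sum (symmetric with center $n/2$, $\gamma$-positive) and $B$ is the $\xi^-$-sum (symmetric with center $(n-1)/2$, $\gamma$-positive), a direct calculation gives $\iI_n(A) = A$ and $\iI_n(B) = xB$; hence $\iI_n(d_{n,k}(x)) = A(x) + xB(x)$ is the symmetric decomposition of $\iI_n(d_{n,k}(x))$ with respect to $n$, and it is $\gamma$-positive by construction. The main technical obstacle lies in part (b): one must design the valley-hopping action so that the dichotomy between $\xi^+$ and $\xi^-$ emerges correctly, the interaction with the constraint $\Fix(w) \subseteq [n-k]$ reproduces the combinatorial sum for $d_{n,k}(x)$, and the orbit representatives are characterized by the stated conditions on decreasing runs. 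The classical case $k=n$ does not see this dichotomy (the condition $w(1) > 0$ being vacuous), so the extension requires careful additional bookkeeping of how the action treats small values at position $1$.
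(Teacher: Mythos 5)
Your part (a) is correct (you organize it slightly differently from the paper, which first expands $x^k(1+x)^{n-k} = ((1+x)-1)^k(1+x)^{n-k}$ to get the alternating Eulerian sum directly and then uses inclusion-exclusion to reach the combinatorial form, whereas you route through the $d_m$-expansion and the fixed-point partition; both are fine). Your part (c) is also correct and is in fact spelled out more carefully than the paper, which simply says ``follows from part (b).'' The issue is part (b), which you yourself flag as the main obstacle: what you give there is a plan, not a proof, and the plan is missing the steps that actually make the argument go through.

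Concretely, three things are absent. First, valley hopping acts on permutations in one-line notation and tracks ascents/descents, but by part (a) the polynomial $d_{n,k}(x)$ is an excedance-generating function over permutations with $\Fix(w) \subseteq [n-k]$; one must first transport the problem to the ascent side. The paper does this via the standard representation (fundamental transformation), which produces the identity $d_{n,k}(x) = \sum_{w \in \eE_{n,n-k}} x^{\asc(w)}$, where $\eE_{n,n-k}$ is the set of $w$ whose left-to-right maxima exceeding $n-k$ all sit at descents. Your sketch never performs this translation, so the object on which your valley-hopping action is supposed to act is not identified. Second, the dichotomy is not just ``bookkeeping'': the two cases require different boundary conventions. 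When $w_1 > n-k$ the paper pads with $w_0 = 0$, $w_{n+1} = n+1$; when $w_1 \le n-k$ it pads with $w_0 = w_{n+1} = n+1$. This change of the left padding is exactly what allows position $1$ to participate in a double descent (hence the ``except possibly the first'' in the description of $\xi^-$-representatives and the drop from $(1+x)^{n-2i}$ to $(1+x)^{n-1-2i}$), and your proposal does not say what padding you would use or why. Third, and most importantly, you must check that each $\varphi_a$ maps $\eE_{n,n-k}$ (within the relevant case) to itself, i.e.\ that the constraint coming from $\Fix(w) \subseteq [n-k]$ is preserved by the action. The paper verifies this by tracking left-to-right maxima: in the first case all LR maxima are preserved, while in the second case only LR maxima greater than $n-k$ are preserved but the first entry remains $\le n-k$, which is enough. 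Without this invariance check the orbit count does not produce $d_{n,k}(x)$, and your sketch gives no indication of why it should hold. Until these three pieces are supplied, part (b) remains an outline rather than a proof.
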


\begin{proof}
The first formula of part (a) follows from 
Equation~(\ref{eq:def-dnk}) by expanding 
$x^k(1+x)^{n-k}$ as $x^k(1+x)^{n-k} = 
(1+x-1)^k(1+x)^{n-k} = \sum_{i=0}^k (-1)^i 
{k \choose i} (1+x)^{n-i}$, applying the linearity 
of $\dD$ and recalling that 
\[ \dD \left( (1+x)^n \right) = \sum_{i=0}^n  
{n \choose i} d_i(x) = A_n(x) \]
for $n \in \NN$. The second formula follows from 
the first by a standard inclusion-exclusion 
argument.

For part (b) we extend the proof of the special 
case $k=n$, given in \cite[Section~4]{AS12}. To 
sketch this argument, which is based on the idea 
of valley hopping \cite{FS74, FS76} (see also 
\cite[Section~4.1]{Ath18} 
\cite[Section~4.2]{Pet15}), 
we recall some notation and terminology. 
Let $w = (w_1, w_2,\dots,w_n) \in \fS_n$ be a 
permutation, written in one-line notation, so 
that $w_i = w(i)$ for $i \in [n]$. An 
\emph{ascent} (respectively, \emph{descent}) of
$w$ is any index $i \in [n-1]$ such that $w_i < 
w_{i+1}$ (respectively, $w_i > w_{i+1}$). The 
number of ascents (respectively, descents) of
$w$ is denoted by $\asc(w)$ (respectively, 
$\des(w)$). An entry $w_j$ is said to be a 
\emph{left-to-right maximum} of $w$ if $w_i < 
w_j$ for all $i<j$. We will denote by 
$\eE_{n,k}$ the set of all permutations $w \in 
\fS_n$ such that $j$ is a descent of $w$ for 
every left-to-right maximum $w_j > k$ of $w$. 

The standard representation (essentially, the 
fundamental transformation, mentioned in 
Section~\ref{sec:euler}) of permutations, as 
described in \cite[p.~23]{StaEC1} and 
\cite[Section~4]{AS12}, and the second 
expression of Proposition~\ref{prop:dnk} (a) 
for $d_{n,k}(x)$ show that 
\[ d_{n,k}(x) = \sum_{w \in \eE_{n,n-k}} 
   x^{\asc(w)}. \]
We claim that 
\begin{align} \label{eq:dnk+}
\sum_{w \in \eE_{n,n-k} : \, w(1) > n-k} 
x^{\asc(w)} & = \sum_{i=0}^{\lfloor n/2 \rfloor} 
                \xi^+_{n,k,i} \, x^i (1+x)^{n-2i} 
\\ \label{eq:dnk-}
\sum_{w \in \eE_{n,n-k} : \, w(1) \le n-k} 
x^{\asc(w)} & = \sum_{i=0}^{\lfloor (n-1)/2 \rfloor} 
								\xi^-_{n,k,i} \, x^i (1+x)^{n-1-2i}, 
\end{align}
where $\xi^+_{n,k,i}$ and $\xi^-_{n,k,i}$ are as in 
the statement of the proposition. This implies part 
(b). 

Let $w = (w_1, w_2,\dots,w_n) \in 
\eE_{n,n-k}$ and let us assume first that $w_1 > n-k$ 
or, equivalently, that all left-to-right maxima of 
$w$ are larger that $n-k$. As a result, all
left-to-right maxima of $w$ are located at descents
of $w$ and, in particular, $w_1 > w_2$. 
We set $w_0 = 0$ and $w_{n+1} = n+1$ and call $i \in 
[n]$ a \emph{double ascent} (respectively, 
\emph{double descent}) of $w$ if $w_{i-1} < w_i < 
w_{i+1}$ (respectively, $w_{i-1} > w_i > w_{i+1}$). 
Given a double ascent or double descent $i$ of $w$, 
and setting $w_i = a$, we 
define the permutation $\varphi_a (w) \in \fS_n$ as 
follows: if $i$ is a double ascent of $w$, then 
$\varphi_a (w)$ is the permutation obtained from $w$ 
by moving $w_i = a$ between $w_j$ and $w_{j+1}$, 
where $j$ is the largest index satisfying $1 \le j 
< i$ and $w_j > w_i > w_{j+1}$ (such an index 
exists because $i$ is not a descent of $w$ 
and hence $a$ is not a left-to-right maximum). 
Similarly, if $i$ is a double descent of $w$, then 
$\varphi_a (w)$ is the permutation obtained from $w$ 
by moving $w_i = a$ between $w_j$ and $w_{j+1}$, 
where $j$ is the smallest index satisfying $i < j 
\le n$ and $w_j < w_i < w_{j+1}$ (such an index 
exists because $w_{n+1} = n+1$). We set $\varphi_a(w) 
= w$ for all other $a \in [n]$. 

We observe that $w$ and
$\varphi_a(w)$ have the same left-to-right maxima (all 
larger that $n-k$) for all $w \in \eE_{n,n-k}$ and $a 
\in [n]$ and conclude that the maps $\varphi_a$ are 
pairwise commuting involutions on the set of 
elements of $\eE_{n,n-k}$ with first entry larger 
than $n-k$. Thus, they define a $\ZZ^n_2$-action on 
this set. Moreover, each orbit of this action has 
a unique element $u$ having no double ascent 
(equivalently, all decreasing runs of $u$ have size 
at least two). As in the proof of 
\cite[Theorem~1.4]{AS12} we find that 
\[ \sum_{w \in \Orb(u)} x^{\asc(w)} = x^{\asc(u)}
(1+x)^{n-2\asc(u)}, \]
where $\Orb(u)$ stands for the orbit of $u$. Summing 
over all orbits yields Equation~(\ref{eq:dnk+}).   

Finally, we consider permutations $w = 
(w_1, w_2,\dots,w_n) \in \eE_{n,n-k}$ such that $w_1 
\le n-k$. We now set $w_0 = w_{n+1} = n+1$ and define
$\varphi_a(w)$ for $a \in [n]$ using the same rules as 
before. We leave to the reader to verify that 
$\varphi_a(w)$ has the same left-to-right maxima larger 
that $n-k$ as $w$, but those which are less than $n-k$ 
may differ; in particular, $w$ and $\varphi_a(w)$ may 
have different first entries. Nevertheless, the first 
entry of $\varphi_a(w)$ is still no larger than $n-k$ 
and the maps $\varphi_a$ define a $\ZZ^n_2$-action on 
the set of elements of $\eE_{n,n-k}$ with first entry 
no larger than $n-k$. Again, each orbit of this action 
has a unique element $u = (u_1, u_2,\dots,u_n)$ having 
no double ascent, although it is now possible that 
$u_1 < u_2$ (equivalently, all decreasing runs of $u$ 
other than the first have size at least two). As in 
the special case $k=0$ (see, for instance, 
\cite[Section~4.1]{Ath18} 
\cite[Section~4.2]{Pet15}) we find that 
\[ \sum_{w \in \Orb(u)} x^{\asc(w)} = x^{\asc(u)}
(1+x)^{n-1-2\asc(u)}. \]
Summing over all orbits yields 
Equation~(\ref{eq:dnk-}). Part (c) follows from 
part (b).
\end{proof}

As a consequence of~\cite[Theorem~3.6]{BS21}, 
each polynomial $d_{n,k}(x)$ is real-rooted and 
the sequence $(d_{n,k}(x))_{0 \le k \le n}$ is 
interlacing for every $n \in \NN$. The 
following statement implies that the symmetric 
decompositions of Proposition~\ref{prop:dnk} 
(c) are, in fact, real-rooted and interlacing.

\begin{corollary} \label{thm:derange} 
The polynomial $\iI_n (\dD(p(x)))$ has a 
real-rooted and interlacing symmetric 
decomposition with respect to $n$ for every 
$p(x) \in \pP_n[x]$.
\end{corollary}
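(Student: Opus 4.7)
The plan is to apply \cite[Theorem~2.7]{BS21} to the polynomial $q(x) := \iI_n(\dD(p(x)))$: under the assumption that $q(x)$ has a nonnegative symmetric decomposition with respect to $n$, the relation $\iI_n(q(x)) \preceq q(x)$ is equivalent to the interlacing property $b(x) \preceq a(x)$ of the decomposition $q(x) = a(x) + xb(x)$, and the latter forces $a(x)$ and $b(x)$ to be real-rooted. The task thus reduces to verifying (i) that $q(x)$ admits a nonnegative symmetric decomposition with respect to $n$, and (ii) that $\iI_n(q(x)) \preceq q(x)$.

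Condition~(i) is immediate. Writing $p(x) = \sum_{k=0}^n c_k x^{n-k}(1+x)^k$ with $c_k \ge 0$, the linearity of $\dD$ and Equation~(\ref{eq:def-dnk}) give
\[ \iI_n (\dD(p(x))) \ = \ \sum_{j=0}^n c_{n-j} \, \iI_n (d_{n,j}(x)), \]
and the nonnegative combination inherits a $\gamma$-positive (in particular, nonnegative) symmetric decomposition from the one each summand possesses by Proposition~\ref{prop:dnk}~(c).

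For condition~(ii), I invoke the fact that $(d_{n,j}(x))_{0 \le j \le n}$ is an interlacing sequence of real-rooted polynomials, recorded in the excerpt as a consequence of \cite[Theorem~3.6]{BS21}. The reciprocal property $p(x) \preceq r(x) \Rightarrow \iI_n(r(x)) \preceq \iI_n(p(x))$ then makes the reverse sequence $(\iI_n(d_{n,n-j}(x)))_{0 \le j \le n}$ interlacing as well. Applying Lemma~\ref{lem:interlace-rec}~(a) to both $\dD(p(x))$ and $\iI_n(\dD(p(x)))$ as nonnegative combinations of these two interlacing sequences, and using the symmetry of $A_n(x)$ and $d_n(x)$ with centers $(n-1)/2$ and $n/2$ (so that $\iI_n(A_n(x)) = xA_n(x)$ and $\iI_n(d_n(x)) = d_n(x)$), yields the chain
\[ A_n(x) \ \preceq \ \dD(p(x)) \ \preceq \ d_n(x) \ \preceq \ \iI_n(\dD(p(x))) \ \preceq \ xA_n(x). \]
Since $A_n(x) \preceq xA_n(x)$ by a standard property of real-rooted polynomials with nonnegative coefficients, Lemma~\ref{lem:interlace-trans} upgrades this chain to a full interlacing sequence; in particular, $\dD(p(x)) \preceq \iI_n(\dD(p(x)))$, which is exactly condition~(ii).

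The main subtlety lies in the transitivity step at the end: neither half of the sandwich above alone delivers the required link $\iI_n(q(x)) \preceq q(x)$, but using $d_n(x)$ as an intermediate pivot and invoking the outer relation $A_n(x) \preceq xA_n(x)$ via Lemma~\ref{lem:interlace-trans} closes the loop. This mirrors the analogous final step in the proof of Theorem~\ref{thm:main}~(b), where $\widetilde{A}_n(x)$ played the role now played by $d_n(x)$.
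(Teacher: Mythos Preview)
Your proof is correct and follows essentially the same route as the paper: establish the nonnegative symmetric decomposition of $\iI_n(\dD(p(x)))$ via Proposition~\ref{prop:dnk}~(c), then feed the interlacing relation $\dD(p(x)) \preceq \iI_n(\dD(p(x)))$ into \cite[Theorem~2.7]{BS21}. The only difference is that the paper obtains this interlacing by directly citing \cite[Corollary~3.7]{BS21}, whereas you re-derive it from the interlacing of $(d_{n,j}(x))_{0 \le j \le n}$ (i.e., \cite[Theorem~3.6]{BS21}) and the pivot-through-$d_n(x)$ argument modeled on the proof of Theorem~\ref{thm:main}~(b); this is a legitimate alternative, just slightly longer.
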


\begin{proof}
As already mentioned, it has been shown 
in~\cite[Corollary~3.7]{BS21} that $\dD(p(x))$
is real-rooted and that it is interlaces 
$\iI_n (\dD(p(x)))$. Thus, 
by~\cite[Theorem~2.7]{BS21}, it suffices to 
confirm that $\iI_n (\dD(p(x)))$ has a 
nonnegative symmetric decomposition with respect 
to $n$. Since, 
\[ \iI_n (\dD (p(x))) = \sum_{k=0}^n c_k \, 
   \iI_n (d_{n,k}(x)) \]
for every $p(x) = \sum_{k=0}^n c_k x^k 
(1+x)^{n-k} \in \pP_n[x]$, this is guaranteed 
by Proposition~\ref{prop:dnk} (c).
\end{proof}

\section{Simplicial complexes and their 
         triangulations}
\label{sec:geom}

This section includes background on simplicial 
complexes, their triangulations and their face 
enumeration which is essential to understand and
prove Theorem~\ref{thm:gamma}. Moreover, the 
polynomials $q_{n,k}(x)$ and $d_{n,k}(x)$ of 
Sections~\ref{sec:euler} and~\ref{sec:derange} 
are generalized in the setting of uniform 
triangulations and basic properties of these 
generalizations are discussed.

\subsection{Simplicial complexes}
\label{sec:complexes}

We assume familiarity with basic notions, such as 
abstract and geometric simplicial complexes and 
the correspondence between them, as explained, for 
instance, in \cite{Bj95, HiAC, StaCCA}. All 
simplicial complexes considered here will be 
abstract and finite. Following~\cite{Ath22}, we 
denote by $\sigma_n$ the abstract simplex $2^V$ 
on an $n$-element vertex set $V$. 

A fundamental enumerative invariant of a 
simplicial complex $\Delta$ is the 
\emph{$h$-polynomial}, defined by the formula
\begin{equation}
\label{eq:hdef}
h(\Delta, x) = \sum_{i=0}^n f_{i-1} (\Delta) \, 
x^i (1-x)^{n-i}, 
\end{equation}
where $f_i(\Delta)$ is the number of 
$i$-dimensional faces of $\Delta$ and $n-1$ is 
its dimension. The $h$-polynomial has nonnegative 
coefficients if $\Delta$ triangulates a ball or a 
sphere \cite[Chapter~II]{StaCCA}. Moreover, it is
symmetric in the latter case, with center of 
symmetry $n/2$.  

Suppose that $\Delta$ triangulates the 
$(n-1)$-dimensional ball. The \emph{interior 
$h$-polynomial} $h^\circ(\Delta,x)$ of $\Delta$ 
is defined by the right-hand side of 
Equation~(\ref{eq:hdef}) when $f_{i-1}(\Delta)$ 
is replaced by the number of $(i-1)$-dimensional 
interior faces of $\Delta$. Then, 
$h^\circ(\Delta,x) = x^n h(\Delta,1/x)$ 
(see, for instance, \cite[Lemma~6.2]{Sta87}), a 
formula which the reader may wish to consider as 
the definition of $h^\circ(\Delta,x)$. The 
\emph{theta polynomial} of $\Delta$ is defined 
as
\begin{equation}
\label{eq:theta-def}
\theta(\Delta, x) = h(\Delta,x) - h(\partial 
                      \Delta, x),
\end{equation}
where $\partial \Delta$ is the boundary complex 
of $\Delta$. This polynomial is symmetric, with 
center of symmetry $n/2$, and under some mild 
hypotheses it has nonnegative coefficients. For 
other basic properties of theta polynomials we 
refer the reader to~\cite{Ath22+}, where their 
role in the enumerative theory of triangulations 
of simplicial complexes is also explained.

\subsection{Triangulations}
\label{sec:triang}

By the term \emph{triangulation} of a simplicial 
complex $\Delta$ we 
will always mean a geometric triangulation. Thus, 
a simplicial complex $\Delta'$ is a triangulation 
of $\Delta$ if there exists a geometric realization 
$K'$ of $\Delta'$ which geometrically subdivides 
a geometric realization $K$ of $\Delta$. The 
restriction of $\Delta'$ to a face $F \in \Delta$ 
is a triangulation of the simplex $2^F$ denoted by 
$\Delta'_F$.

The \emph{local $h$-polynomial} of a triangulation 
$\Gamma$ of a simplex $2^V$ is defined 
\cite[Definition~2.1]{Sta92} by the formula 
\begin{equation} \label{eq:def-local-h}
  \ell_V (\Gamma, x) \ = \sum_{F \subseteq V} 
  \, (-1)^{|V \sm F|} \, h (\Gamma_F, x).
\end{equation}
Stanley~\cite{Sta92} showed 
that $\ell_V (\Gamma, x)$ has nonnegative
coefficients and that it is symmetric, with center
of symmetry $|V|/2$. The significance of local 
$h$-polynomials stems from Stanley's Locality 
Formula~\cite[Theorem~3.2]{Sta92}, which expresses
the $h$-polynomial of a triangulation $\Delta'$ of
a pure simplicial complex $\Delta$ in terms of the
local $h$-polynomials of the restrictions of 
$\Delta'$ to the faces of $\Delta$ and the 
$h$-polynomials of the links of these faces in 
$\Delta$.

\medskip
\noindent
\textbf{Barycentric subdivision.}
Barycentric subdivision is a prototypical example 
of uniform triangulation of a simplicial complex. 
The \emph{barycentric subdivision} of a 
simplicial complex $\Delta$, denoted by 
$\sd(\Delta)$, is defined as the simplicial complex 
which consists of all chains of nonempty faces of 
$\Delta$. As is well known, $\sd(\Delta)$ can be 
realized as a triangulation of $\Delta$. 

\medskip
\noindent
\textbf{The antiprism construction.}
Every triangulation $\Gamma$ of a simplex $2^V$ can 
be extended to a triangulation of a sphere of the 
same dimension as follows. Let $V = \{v_1, 
v_2,\dots,v_n\}$ and pick an $n$-element set $U = 
\{u_1, u_2,\dots,u_n\}$ which is disjoint from the 
vertex set of $\Gamma$. The \emph{antiprism sphere} 
over $\Gamma$ \cite[Section~4]{Ath12},
denoted by $\Delta_\aA(\Gamma)$, is defined as the 
collection of sets of the form $E \cup G$, where 
$E = \{ u_i: i \in I\}$ is a face of the simplex 
$2^U$ for some $I \subseteq [n]$ and $G$ is a face 
of the restriction $\Gamma_F$ of $\Gamma$ to the 
face $F = \{ v_j: j \in [n] \sm I\}$ of the simplex 
$2^V$ which is complementary to $E$. The complex 
$\Delta_\aA(\Gamma)$ is a triangulation of the 
$(n-1)$-dimensional sphere which contains $2^U$ 
and $\Gamma$ as subcomplexes and naturally 
triangulates $\Delta_\aA(2^V)$; for other basic 
properties, see \cite[Proposition~4.6]{Ath12} 
\cite[Proposition~4.1]{Ath20} and 
\cite[Section~3]{ABK22}.

\subsection{Uniform triangulations}
\label{subsec:uniform}

Let $\fF = (f_\fF(i,j))_{0 \le i \le j \le n}$ 
be a triangular array of nonnegative integers. 
A triangulation $\Gamma$ of the simplex $\sigma_n
= 2^V$ is called \emph{$\fF$-uniform} \cite{Ath22} 
if for all $0 \le i \le j \le n$, the restriction 
of $\Gamma$ 
to any face of $\sigma_n$ of dimension $j-1$ has 
exactly $f_\fF(i,j)$ faces of dimension $i-1$, and 
\emph{uniform} if it is \emph{$\fF$-uniform} 
for some $\fF$ (these definitions extend naturally
to triangulations of any simplicial complex of 
dimension less than $n$). The array 
$\fF$ is called the \emph{$f$-triangle} associated 
to $\Gamma$. 

We define the maps $\hH^\circ_\fF, \lL_\fF: \RR_n[x] 
\to \RR_n[x]$ by setting
\begin{align} 
\hH^\circ_\fF (x^m) & = h^\circ_\fF (\sigma_m,x) 
                      = x^m h_\fF (\sigma_m,1/x) 
\label{eq:def-Hcirc} \\
\lL_\fF (x^m) & = \ell_\fF(\sigma_m,x) 
     \label{eq:def-L}
\end{align}
for $m \in \{0, 1,\dots,n\}$ and extending by 
linearity, where  $h_\fF (\sigma_m,x)$, $h^\circ_\fF 
(\sigma_m,x)$ and $\ell_\fF(\sigma_m,x)$ are the 
$h$-polynomial, the interior $h$-polynomial and the 
local $h$-polynomial, respectively, of the 
restriction of $\Gamma$ to any $(m-1)$-dimensional 
face of $\sigma_n$. In analogy with the cases of 
Eulerian and derangement transformations, we 
consider the polynomials $q_{\fF,m,k}(x)$ and 
$\ell_{\fF,m,k}(x)$ defined by 
\begin{align} 
\iI_m (q_{\fF,m,k}(x)) & = \hH_\fF^\circ 
             \left( x^{m-k}(1+x)^k \right)  
       \label{eq:def-qFnk} \\
\ell_{\fF,m,k}(x)) & = \lL_\fF 
             \left( x^k(1+x)^{m-k} \right)
\end{align}
for all integers $0 \le k \le m \le n$. We 
have $q_{\fF,m,0}(x) = \ell_{\fF,m,0}(x) = 
h_\fF(\sigma_m,x)$ and $\ell_{\fF,m,m}(x) = 
\ell_\fF(\sigma_m,x)$ for every $m \in 
\{0, 1,\dots,n\}$.

\begin{example} \label{ex:qFnk} \rm 
Let $\Gamma$ be the barycentric subdivision
of $2^V$. Then, 
\[ \hH_\fF^\circ (x^m) = x^m h_\fF(\sigma_m,1/x) 
   = x^m A_m(1/x) = 
	 \begin{cases} 1, & \text{if $m=0$} \\
    xA_m(x), & \text{if $m \ge 1$} \end{cases} \]
and 
\[ \lL_\fF (x^m) = \ell_\fF(\sigma_m,x) = d_m(x)  
   \]
for $m \in \{0, 1,\dots,n\}$; see, for instance,
\cite[Section~3.3.1]{Ath18} 
\cite[Proposition~2.4]{Sta92}. Hence, 
$\hH_\fF^\circ$ and $\lL_\fF$ are the restrictions 
of $\aA^\circ$ and $\dD$ to $\RR_n[x]$ and 
$q_{\fF,m,k}(x) = q_{m,k}(x)$, $\ell_{\fF,m,k}(x) 
= d_{m,k}(x)$ for all $m, k$.
\qed
\end{example}

\begin{proposition} \label{prop:geom} 
Let $\Gamma$ be a uniform triangulation of an 
$(n-1)$-dimensional simplex $2^V$ with associated 
$f$-triangle $\fF$.  
\begin{itemize}
\itemsep=0pt
\item[{\rm (a)}]
The polynomials $q_{\fF,m,k}(x)$ and $\ell_{\fF,m,k}
(x)$ satisfy the recurrence 
\begin{align*} 
  q_{\fF,m,k+1}(x) & = q_{\fF,m,k}(x) + 
  xq_{\fF,m-1,k}(x) \\ 
	\ell_{\fF,m,k+1}(x) & = \ell_{\fF,m,k}(x) - 
  \ell_{\fF,m-1,k}(x)
\end{align*}
for $0 \le k < m \le n$.

\item[{\rm (b)}]
We have
\begin{align*} 
q_{\fF,m,k}(x) & = \sum_{i=0}^k {k \choose i} x^i 
                 h_\fF(\sigma_{m-i},x) \\
\ell_{\fF,m,k}(x) & = \sum_{i=0}^k (-1)^i  
{k \choose i} h_\fF(\sigma_{m-i},x) 							
\end{align*}						
for $0 \le k \le m \le n$.

\item[{\rm (c)}]
Let $\Delta_\aA(\Gamma)$ be the antiprism sphere 
over $\Gamma$, with new vertices $u_1, 
u_2,\dots,u_n$. Then, for every $k \in 
\{0, 1,\dots,n\}$, $q_{\fF,n,k}(x)$ is equal to 
the $h$-polynomial of the induced subcomplex 
of $\Delta_\aA(\Gamma)$ obtained by removing 
vertices $u_{k+1},\dots,u_n$.

\item[{\rm (d)}]
The polynomial $q_{\fF,n,n}(x)$ is symmetric, with 
center of symmetry $n/2$. 
\end{itemize}
\end{proposition}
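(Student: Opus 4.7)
The plan is to handle the four parts in order, moving from algebraic manipulation to a geometric interpretation. For part (a), the polynomial identities $x^{m-k-1}(1+x)^{k+1} = x^{m-k}(1+x)^k + x^{m-k-1}(1+x)^k$ and $x^{k+1}(1+x)^{m-k-1} = x^k(1+x)^{m-k} - x^k(1+x)^{m-k-1}$ translate, via the linearity of $\hH_\fF^\circ$ and $\lL_\fF$ together with the reciprocal definition~(\ref{eq:def-qFnk}) of $q_{\fF,m,k}(x)$, into the two stated recurrences; this is the direct analogue of the argument used for Proposition~\ref{prop:qnk}~(a).

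For part (b), the cleanest route is direct binomial expansion rather than iterating (a). For $q_{\fF,m,k}(x)$, write $x^{m-k}(1+x)^k = \sum_{j=0}^k \binom{k}{j} x^{m-j}$, apply $\hH_\fF^\circ$ termwise to compute $\iI_m(q_{\fF,m,k}(x)) = \sum_{j=0}^k \binom{k}{j} x^{m-j} h_\fF(\sigma_{m-j},1/x)$, and then apply $\iI_m$ once more (it is an involution on $\RR_m[x]$) to obtain the claimed formula. For $\ell_{\fF,m,k}(x)$, expand $x^k = ((1+x)-1)^k$, so that $x^k(1+x)^{m-k} = \sum_{j=0}^k (-1)^j \binom{k}{j} (1+x)^{m-j}$. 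The only structural identity needed is $\lL_\fF((1+x)^m) = h_\fF(\sigma_m,x)$, which one obtains by inverting the defining relation~(\ref{eq:def-local-h}) in the uniform setting to get $h_\fF(\sigma_m,x) = \sum_{j=0}^m \binom{m}{j} \ell_\fF(\sigma_j,x)$ and comparing with the definition of $\lL_\fF$.

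For part (c), let $T_k$ denote the induced subcomplex of $\Delta_\aA(\Gamma)$ obtained by deleting $u_{k+1},\dots,u_n$. By the construction of the antiprism sphere, the faces of $T_k$ are exactly the sets $E \cup G$ with $E = \{u_i : i \in I\}$ for some $I \subseteq [k]$ and $G$ a face of the restriction $\Gamma_{F_I}$ to $F_I = V \sm \{v_i : i \in I\}$; these expressions are pairwise distinct because the index set $I$ is determined by the face. Uniformity of $\Gamma$ yields $f_{r-1}(T_k) = \sum_{j=0}^k \binom{k}{j} f_\fF(r-j,n-j)$, and substituting into the defining equation~(\ref{eq:hdef}) of $h(T_k,x)$, interchanging sums and extracting the factor $x^j$ gives
\[ h(T_k,x) = \sum_{j=0}^k \binom{k}{j} x^j h_\fF(\sigma_{n-j},x), \]
which is precisely the expression for $q_{\fF,n,k}(x)$ provided by part (b). Part (d) is then an immediate corollary: taking $k = n$ recovers $T_n = \Delta_\aA(\Gamma)$, a triangulation of the $(n-1)$-sphere, so $q_{\fF,n,n}(x) = h(\Delta_\aA(\Gamma),x)$ is symmetric with center of symmetry $n/2$. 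The only delicate step is the face description and count in part (c); everything else is formal manipulation.
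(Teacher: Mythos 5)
Your proposal is correct and follows essentially the same approach as the paper: part (a) via the same polynomial identities, part (b) via binomial expansion and linearity (with the $\lL_\fF$ case handled, as the paper does for the $d_{n,k}$, by expanding $x^k=((1+x)-1)^k$ and using $\lL_\fF((1+x)^m)=h_\fF(\sigma_m,x)$), part (c) by decomposing the faces of the induced subcomplex according to which $u_i$ they contain, and part (d) as the $k=n$ case of (c). The only cosmetic difference is that in (c) you route the face decomposition through the $f$-vector $f_{r-1}(T_k)=\sum_j\binom{k}{j}f_\fF(r-j,n-j)$ before substituting into the $h$-polynomial formula, whereas the paper partitions the sum $\sum_F x^{|F|}(1-x)^{n-|F|}$ directly; these are the same computation in a different order.
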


\begin{proof}
The proofs of (a) and (b) are identical to those of 
the corresponding statements of 
Proposition~\ref{prop:qnk} for the $q_{\fF,m,k}(x)$
and follow by similar reasoning for the 
$\ell_{\fF,m,k}(x)$. Given part (b), one can extend 
the proof of part (d) of \cite[Proposition~4.1]{Ath20} 
(which corresponds to the special case $k=n$) to 
prove part (c). To sketch a more direct argument, 
let $\Delta$ be the induced subcomplex of 
$\Delta_\aA(\Gamma)$ obtained by removing vertices 
$u_{k+1},\dots,u_n$. By the defining 
equation~(\ref{eq:hdef}) of the $h$-polynomial we 
have
\[ h(\Delta,x) = \sum_{F \in \Delta} x^{|F|} 
    (1-x)^{n-|F|} = \sum_{I \subseteq [k]} 
		\sum_{F \in \Delta_I} x^{|F|} (1-x)^{n-|F|}, \]
where $\Delta_I$ consists of those faces $F \in 
\Delta$ for which $F \cap U = \{u_i: i \in I\}$.
One then recognizes the inner sum as $x^{|I|}$ times
the $h$-polynomial of the restriction of $\Gamma$ to
the face $\{ v_j: j \in [n] \sm I\}$ of $2^V$ which 
is complementary to $\{u_i: i \in I\}$ and concludes 
that 
\[ h(\Delta,x) = \sum_{\{v_{k+1},\dots,v_n\} \subseteq 
   F \subseteq V} x^{n-|F|} h(\Gamma_F,x) = \sum_{i=0}^k 
	 {k \choose i} x^i h_\fF(\sigma_{n-i},x). \]
By part (b), the latter sum is equal to $q_{\fF,n,k}
(x)$. Part (d) follows from part (c), which implies 
that $q_{\fF,n,n}(x) = h (\Delta_\aA(\Gamma),x)$ is 
the $h$-polynomial of a triangulation of the 
$(n-1)$-dimensional sphere.
\end{proof}

\section{Gamma-positivity of the $\hH_\fF^\circ$ and
         $\lL_\fF$ transformations}
\label{sec:uniform}

This section proves Theorem~\ref{thm:gamma}. We first 
introduce a family of enumerative invariants of a 
triangulation $\Gamma$ of a simplex $2^V$ which 
provides a common generalization of the 
$h$-polynomial and the local $h$-polynomial of 
$\Gamma$. This generalization is different from the 
relative local $h$-polynomial $\ell_V(\Gamma, E, x)$, 
introduced in \cite[Section~3]{Ath12}.

\begin{definition} \label{def:local-VE} 
Given a triangulation $\Gamma$ of the simplex $2^V$ 
and a face $E \subseteq V$ of $2^V$, the local 
$h$-polynomial of $\Gamma$ with respect to $V$ and 
$E$ is defined by the formula 
\begin{equation} \label{eq:local-VE-def}
\ell_{V,E} (\Gamma, x) =
\sum_{E \subseteq F \subseteq V} (-1)^{|V \sm F|}
h(\Gamma_F, x). 
\end{equation} 
\end{definition}

\begin{remark} \label{rem:local-VE} \rm 
(a) By definition, we have $\ell_{V,\varnothing} 
(\Gamma, x) = \ell_V (\Gamma, x)$ and $\ell_{V,V} 
(\Gamma, x) = h(\Gamma, x)$. We also have $\ell_{V,E} 
(\Gamma, x) = h(\Gamma, x) - h(\Gamma_E,x)$ for every
facet (maximal with respect to inclusion face) $E$ of 
$\partial(2^V)$.

(b) By inclusion-exclusion \cite[Section~2.1]{StaEC1},
for given $E \subseteq G \subseteq V$,
\[ h(\Gamma_G, x) = \sum_{E \subseteq F 
   \subseteq G} \ell_{F,E} (\Gamma_F, x). \]

(c) Combining the defining 
equation~(\ref{eq:local-VE-def}) of $\ell_{V,E}
(\Gamma, x)$ with the identity of part (b), applied 
for $E = \varnothing$, we get
\begin{align*}
\ell_{V,E} (\Gamma, x) & = 
\sum_{E \subseteq G \subseteq V} (-1)^{|V \sm G|}
h(\Gamma_G, x) = 
\sum_{E \subseteq G \subseteq V} (-1)^{|V \sm G|} 
\left( \, \sum_{F \subseteq G} \ell_F(\Gamma_F, x) 
\right) \\ & =  
\sum_{F \subseteq V} \ell_F(\Gamma_F, x) 
\left( \, \sum_{E \cup F \subseteq G \subseteq V} 
(-1)^{|V \sm G|} \right) = \sum_{V \sm E \subseteq 
F \subseteq V} \ell_F (\Gamma_F, x)
\end{align*}
for every $E \subseteq V$. This formula shows 
that $\ell_{V,E} (\Gamma, x)$ has nonnegative 
coefficients.
\qed
\end{remark}

\begin{example} \label{ex:local-VE} \rm 
(a) By part (a) of Proposition~\ref{prop:dnk}, in 
the special case of barycentric subdivision we have 
$\ell_{V,E} (\Gamma, x) = d_{|V|,|V \sm E|}(x)$ for
every $E \subseteq V$. 

(b) More generally, let $\Gamma$ be the $r$-fold 
edgewise subdivision of the barycentric subdivision 
(termed 
in \cite[Section~3]{Ath22} as the \emph{$r$-colored 
barycentric subdivision}) of $2^V$. Let $n=|V|$ and 
$\ZZ_r \wr \fS_n$ be the group of $r$-colored 
permutations of the set $[n]$ (see 
\cite[Section~5]{Ath20} for any undefined terminology 
about $r$-colored permutations). By 
\cite[Proposition~5.1]{Ath20} and an application of
the inclusion-exclusion principle we have
\begin{equation} \label{eq:fexc}
  \ell_{V,E} (\Gamma, x) = 
  \sum_{w \in (\ZZ_r \wr \fS_n)^b : \, \Fix(w) 
	\subseteq [k]} x^{\fexc(w)/r}, 
\end{equation}
where $\fexc(w)$ is the flag excedance number of $w 
\in \ZZ_r \wr \fS_n$, $\Fix(w)$ is the set of fixed 
points of $w \in \ZZ_r \wr \fS_n$ of zero color, 
$(\ZZ_r \wr \fS_n)^b$ is the set of $w \in \ZZ_r \wr 
\fS_n$ such that the sum of the colors of the 
coordinates of $w$ is divisible by $r$ and $k = |E|$.
This polynomial reduces to $d_{n,n-k}(x)$ for $r=1$.
\qed
\end{example}

The following two formulas were exploited 
in~\cite{Ath22+} in order to investigate unimodality
and $\gamma$-positivity properties of $h$-polynomials
of triangulations of balls and local $h$-polynomials 
of triangulations of simplices:
\begin{align} 
h(\Gamma, x) & = \sum_{F \subseteq V} 
\theta (\Gamma_F, x) \, A_{|V \sm F|} (x)
\label{eq:Ath22} \\
\ell_V (\Gamma, x) & = \sum_{F \subseteq V}
\theta (\Gamma_F, x) \, d_{|V \sm F|} (x).
\label{eq:KMS}
\end{align} 

The second formula is equivalent to 
\cite[Theorem~4.7]{KMS19} and the first follows from 
that and the formula $h(\Gamma, x) = 
\sum_{F \subseteq V} \ell_F(\Gamma_F, x)$; see 
\cite[Theorem~3.4]{Ath22+} for a generalization. 
The following statement provides a common 
generalization of the two formulas.

\begin{proposition} \label{prop:local-VE} 
For every triangulation $\Gamma$ of the simplex 
$2^V$ and every $E \subseteq V$,
\begin{equation} \label{eq:AKMS}
\ell_{V,E} (\Gamma, x) = \sum_{F \subseteq V}
\theta (\Gamma_F, x) \, 
d_{|V \sm F|,|V \sm (E \cup F)|} (x).
\end{equation} 
\end{proposition}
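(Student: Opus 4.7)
The plan is to reduce the claim to the known identity (5.3) by first rewriting $\ell_{V,E}(\Gamma, x)$ as a sum of ordinary local $h$-polynomials via Remark~5.3(c), then substituting the expression for each $\ell_F(\Gamma_F, x)$ supplied by (5.3), and finally collapsing the resulting double sum through a direct combinatorial identity for the polynomials $d_{n,k}(x)$.

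More precisely, Remark~5.3(c) gives
$$\ell_{V,E}(\Gamma, x) \ = \sum_{V \sm E \subseteq F \subseteq V} \ell_F(\Gamma_F, x).$$
Applying formula (5.3) to each summand as $\ell_F(\Gamma_F, x) = \sum_{G \subseteq F} \theta(\Gamma_G, x) \, d_{|F \sm G|}(x)$, and swapping the order of summation, one obtains
$$\ell_{V,E}(\Gamma, x) \ = \sum_{G \subseteq V} \theta(\Gamma_G, x) \sum_{G \, \cup \, (V \sm E) \, \subseteq \, F \, \subseteq \, V} d_{|F \sm G|}(x).$$
The inner sum is parametrized by the subsets $H \subseteq E \sm G$ via $F = G \cup (V \sm E) \cup H$. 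Since $H$ is disjoint from both $G$ and $V \sm E$, one computes $|F \sm G| = |V \sm (E \cup G)| + |H|$. Setting $k = |V \sm (E \cup G)|$ and $r = |E \sm G|$, the inner sum becomes $\sum_{j=0}^{r} \binom{r}{j} d_{k+j}(x)$.

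To finish, I would observe that $\sum_{j=0}^{r} \binom{r}{j} d_{k+j}(x) = \dD \bigl( x^k (1+x)^r \bigr) = d_{k+r, k}(x)$, directly from the linearity of the derangement transformation $\dD$ and its defining equation (4.1). Since $k + r = |V \sm G|$ and $k = |V \sm (E \cup G)|$, the inner sum equals $d_{|V \sm G|, |V \sm (E \cup G)|}(x)$; renaming $G$ to $F$ yields (5.4).

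The argument is essentially bookkeeping, so I do not expect a serious obstacle. The one point requiring care is the correct identification of the cardinalities in the inner sum so that the collapsed expression matches the precise definition of $d_{n,k}(x)$ given in (4.1). As sanity checks, the cases $E = \varnothing$ and $E = V$ of (5.4) reduce exactly to the known formulas (5.3) and (5.2), respectively, consistently with the identities $d_{n,n}(x) = d_n(x)$ and $d_{n,0}(x) = A_n(x)$ noted after Proposition~\ref{prop:dnk}.
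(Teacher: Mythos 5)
Your argument is correct, but it is genuinely different from the paper's own proof, which you might find instructive to compare. The paper expands $\ell_{V,E}(\Gamma,x)$ via its defining alternating sum~(\ref{eq:local-VE-def}), substitutes the expansion $h(\Gamma_G,x) = \sum_{F\subseteq G}\theta(\Gamma_F,x)\,A_{|G\sm F|}(x)$ from~(\ref{eq:Ath22}), swaps the order of summation, and then recognizes the inner alternating sum $\sum_{E\cup F\subseteq G\subseteq V}(-1)^{|V\sm G|}A_{|G\sm F|}(x)$ as $d_{|V\sm F|,\,|V\sm(E\cup F)|}(x)$ via the first formula in Proposition~\ref{prop:dnk}(a). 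You instead start from the positive expansion $\ell_{V,E}(\Gamma,x) = \sum_{V\sm E\subseteq F\subseteq V}\ell_F(\Gamma_F,x)$ of Remark~\ref{rem:local-VE}(c), substitute the companion expansion~(\ref{eq:KMS}) of the local $h$-polynomial in terms of derangement polynomials, and collapse the resulting inner sum via the binomial identity $\sum_{j=0}^r\binom{r}{j}d_{k+j}(x) = \dD\bigl(x^k(1+x)^r\bigr) = d_{k+r,k}(x)$, which follows at once from the linearity of $\dD$ and the defining equation~(\ref{eq:def-dnk}). The two routes are dual: the paper's uses the alternating-sum face of the identities (Equations~(\ref{eq:local-VE-def}) and~(\ref{eq:Ath22}), the first formula of Proposition~\ref{prop:dnk}(a)), while yours uses the positive-sum face (Remark~\ref{rem:local-VE}(c), Equation~(\ref{eq:KMS}), the defining Equation~(\ref{eq:def-dnk})). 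Both are about the same length; yours has the small aesthetic advantage of never introducing signs, at the cost of requiring the extra (but trivial) observation that the inner binomial sum is $\dD$ applied to $x^k(1+x)^r$. Your cardinality bookkeeping ($k+r = |V\sm G|$, $k = |V\sm(E\cup G)|$) checks out, and the two sanity checks at $E=\varnothing$ and $E=V$ correctly recover~(\ref{eq:KMS}) and~(\ref{eq:Ath22}), respectively.
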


\begin{proof}
Using the defining equation~(\ref{eq:local-VE-def}) 
for $\ell_{V,E} (\Gamma, x)$ and
Equation~(\ref{eq:Ath22}), we get

\begin{align*}
\ell_{V,E} (\Gamma, x) & = 
\sum_{E \subseteq G \subseteq V} (-1)^{|V \sm G|}
h(\Gamma_G, x) \\ & = 
\sum_{E \subseteq G \subseteq V} (-1)^{|V \sm G|} 
\left( \, \sum_{F \subseteq G} \theta (\Gamma_F, x) 
\, A_{|G \sm F|} (x) \right) \\ & =  
\sum_{F \subseteq V} \theta (\Gamma_F, x) 
\left( \, \sum_{E \cup F \subseteq G \subseteq V} 
(-1)^{|V \sm G|} \, A_{|G \sm F|} (x) \right) \\ 
& = \sum_{F \subseteq V} \theta (\Gamma_F, x) \, 
d_{|V \sm F|,|V \sm (E \cup F)|} (x)
\end{align*}
and the proof follows.
\end{proof}

We recall that a triangulation $\Gamma$ of the 
simplex $2^V$ 
is called \emph{theta unimodal} (respectively, 
\emph{theta $\gamma$-positive}) \cite{Ath22+} if 
$\theta(\Gamma_F,x)$ is unimodal (respectively, 
$\gamma$-positive) for every $F \subseteq V$.
The barycentric subdivision of any regular cell 
decomposition of the simplex 
\cite[Theorem~4.6]{KMS19}, and the $r$-fold 
edgewise subdivision (for $r \ge n$), antiprism 
triangulation, interval triangulation and 
$r$-colored barycentric subdivision of any 
triangulation of the simplex $\sigma_n$ (by 
\cite[Corollary~3.9]{Ath22+}, combined with 
results of \cite[Section~7]{Ath22} 
\cite[Section~5]{ABK22} \cite[Section~5]{AT21}) 
are among those triangulations of $\sigma_n$ 
which are known to be theta $\gamma$-positive.

\begin{corollary} \label{cor:local-VE} 
Given any theta unimodal (respectively, theta
$\gamma$-positive) triangulation $\Gamma$ of 
the $(n-1)$-dimensional simplex $2^V$, the 
polynomials $x^n \ell_{V,E}(\Gamma,1/x)$ and 
$\sum_{E \subseteq G \subseteq V} h^\circ(\Gamma_G,
x)$ have a unimodal (respectively, $\gamma$-positive) 
symmetric decomposition with respect to $n$ for 
every $E \subseteq V$.
\end{corollary}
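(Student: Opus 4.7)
The plan is to reduce each of the two claims to a weighted sum of $\theta$-polynomials against a known building block, and then invoke the elementary principle that products of $\gamma$-positive (respectively, symmetric unimodal) polynomials with compatible centers of symmetry are $\gamma$-positive (respectively, symmetric unimodal).

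For the first polynomial I would apply the reciprocation operator $\iI_n$ to both sides of Equation~(\ref{eq:AKMS}). Because $\theta(\Gamma_F, x) \in \RR_{|F|}[x]$ is symmetric with center of symmetry $|F|/2$---a standard fact, deducible from the general identity $h(\Gamma_F, x) = h^\circ(\Gamma_F, x) + (1-x)\,h(\partial \Gamma_F, x)$ together with the symmetry of $h(\partial \Gamma_F, x)$ with center $(|F|-1)/2$---reciprocation distributes over each product on the right-hand side of~(\ref{eq:AKMS}) and yields
\[ \iI_n\bigl(\ell_{V,E}(\Gamma, x)\bigr) \,=\, \sum_{F \subseteq V} \theta(\Gamma_F, x) \cdot \iI_{n-|F|}\bigl( d_{|V \sm F|,\, |V \sm (E \cup F)|}(x) \bigr). \]
Proposition~\ref{prop:dnk}(b) then supplies, for each $F$, a $\gamma$-positive symmetric decomposition $a_F(x) + x\,b_F(x)$ of $\iI_{n-|F|}(d_{\cdot,\cdot}(x))$ with respect to $n-|F|$, in which $a_F$ and $b_F$ are separately $\gamma$-positive (or, in the theta unimodal case, merely symmetric unimodal with nonnegative coefficients) of centers $(n-|F|)/2$ and $(n-|F|-1)/2$, respectively.

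For the second polynomial I would proceed analogously, this time starting from Equation~(\ref{eq:Ath22}). Applied to each restriction $\Gamma_G$ and combined with the identity $x^m A_m(1/x) = \aA^\circ(x^m)$ valid for all $m \in \NN$ (immediate from~(\ref{eq:def-eulerian}) and the symmetry of $A_m(x)$ with center $(m-1)/2$), it produces the reciprocal formula
\[ h^\circ(\Gamma_G, x) \,=\, \sum_{F \subseteq G} \theta(\Gamma_F, x) \cdot \aA^\circ(x^{|G \sm F|}). \]
Summing this over $G$ with $E \subseteq G \subseteq V$, parametrizing such $G$ as $(E \cup F) \sqcup S$ with $S \subseteq V \sm (E \cup F)$, and invoking linearity of $\aA^\circ$ give
\[ \sum_{E \subseteq G \subseteq V} h^\circ(\Gamma_G, x) \,=\, \sum_{F \subseteq V} \theta(\Gamma_F, x) \cdot \aA^\circ\bigl( x^{|E \sm F|} (1+x)^{|V \sm (E \cup F)|} \bigr). \]
Since $|E \sm F| + |V \sm (E \cup F)| = n-|F|$, the argument of $\aA^\circ$ belongs to $\pP_{n-|F|}[x]$, so Theorem~\ref{thm:main}(b) furnishes a real-rooted and interlacing---in particular $\gamma$-positive and unimodal---symmetric decomposition of the resulting image with respect to $n-|F|$.

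Once both displays are in hand, the closing step is uniform: the product of a $\gamma$-positive polynomial with center $a/2$ and a $\gamma$-positive polynomial with center $b/2$ is $\gamma$-positive with center $(a+b)/2$, as follows from $x^i(1+x)^{a-2i} \cdot x^j(1+x)^{b-2j} = x^{i+j}(1+x)^{(a+b)-2(i+j)}$; the analogous closure property for symmetric unimodal polynomials with nonnegative coefficients is classical. Under theta $\gamma$-positivity (respectively, theta unimodality) of $\Gamma$, each summand $\theta(\Gamma_F, x) a_F(x)$ and $\theta(\Gamma_F, x) b_F(x)$ is therefore $\gamma$-positive (respectively, symmetric unimodal) with center $n/2$ and $(n-1)/2$, and summation over $F$ assembles the claimed symmetric decomposition with respect to $n$ for both target polynomials. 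The main technical step is the derivation of the two displayed identities; everything after that is center-of-symmetry bookkeeping, which presents no genuine obstacle.
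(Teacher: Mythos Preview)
Your argument is correct. For the first polynomial $x^n\ell_{V,E}(\Gamma,1/x)$ you do exactly what the paper does: reciprocate Equation~(\ref{eq:AKMS}), use the symmetry of $\theta(\Gamma_F,x)$, and invoke Proposition~\ref{prop:dnk}. (A small stylistic point: the decomposition of $\iI_{n-|F|}(d_{\cdot,\cdot}(x))$ coming from Proposition~\ref{prop:dnk} is always $\gamma$-positive, independently of the theta hypothesis on $\Gamma$, so your parenthetical ``or, in the theta unimodal case, merely symmetric unimodal'' is unnecessary, though harmless.)

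For the second polynomial your route genuinely differs from the paper's. The paper reduces to the first claim: using Remark~\ref{rem:local-VE}(b) it writes
\[
\sum_{E\subseteq G\subseteq V} h^\circ(\Gamma_G,x)
\;=\; \sum_{E\subseteq F\subseteq V} x^{|F|}\,\ell_{F,E}(\Gamma_F,1/x)\,(1+x)^{\,n-|F|},
\]
and then applies the already-proved statement about $x^{|F|}\ell_{F,E}(\Gamma_F,1/x)$ to each summand. You instead expand $h^\circ(\Gamma_G,x)$ via the reciprocated form of Equation~(\ref{eq:Ath22}), interchange sums, and land on $\sum_F \theta(\Gamma_F,x)\cdot\aA^\circ\bigl(x^{|E\sm F|}(1+x)^{|V\sm(E\cup F)|}\bigr)$, after which you call on Theorem~\ref{thm:main}(b). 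Both routes are short and valid; the paper's has the advantage of staying entirely within the machinery of Section~\ref{sec:uniform} and Proposition~\ref{prop:dnk}, whereas yours imports the full strength of Theorem~\ref{thm:main}(b) (real-rootedness and interlacing), which is more than is needed here but makes the two halves of the corollary pleasantly parallel.
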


\begin{proof}
Since $\theta(\Gamma_F, x)$ is assumed to be 
unimodal (respectively, $\gamma$-positive), with 
center of symmetry $|F|/2$, for every $F \subseteq 
V$, the statement about $x^n \ell_{V,E}(\Gamma,1/x)$ 
follows directly from
Propositions~\ref{prop:dnk}~(b) 
and~\ref{prop:local-VE}. Applying the formula of 
Remark~\ref{rem:local-VE} (b) for $h(\Gamma_G,x) = 
x^{|G|} \, h^\circ(\Gamma,1/x)$ gives
\begin{align*}
\sum_{E \subseteq G \subseteq V} h^\circ(\Gamma_G,x) 
& = \sum_{E \subseteq G \subseteq V} \left( \, 
\sum_{E \subseteq F \subseteq G} x^{|G|} \, 
\ell_{F,E} (\Gamma_F, 1/x) \right) \\ & = 
\sum_{E \subseteq F \subseteq V} x^{|F|} \, 
\ell_{F,E} (\Gamma_F, 1/x) \left( \, 
\sum_{F \subseteq G \subseteq V} x^{|G \sm F|} \right)
\\ & = \sum_{E \subseteq F \subseteq V} x^{|F|} \, 
\ell_{F,E} (\Gamma_F, 1/x) \, (1+x)^{n-|F|}.
\end{align*}

\medskip
\noindent
The last sum has a unimodal (respectively, 
$\gamma$-positive) symmetric 
decomposition with respect to $n$ for every 
$E \subseteq V$, since $x^{|F|} \, \ell_{F,E} 
(\Gamma_F, 1/x)$ has such a decomposition with 
respect to $|F|$ for $E \subseteq F \subseteq V$.
\end{proof}

The following statement is the analogue of 
Theorem~\ref{thm:gamma} for the 
$\lL_\fF$ transformation.
\begin{corollary} \label{cor:gamma} 
Let $\Gamma$ be a uniform triangulation of the 
$(n-1)$-dimensional simplex with associated 
$f$-triangle $\fF$. 
\begin{itemize}
\itemsep=0pt
\item[(a)] 
If $\Gamma$ is theta unimodal, then 
$\iI_n (\lL_\fF(p(x)))$ has a unimodal symmetric 
decomposition with respect to $n$ for every 
$p(x) \in \pP_n[x]$.

\item[(b)] 
If $\Gamma$ is theta $\gamma$-positive, then 
$\iI_n (\lL_\fF(p(x)))$ has a $\gamma$-positive 
symmetric decomposition with respect to $n$ for 
every $p(x) \in \pP_n[x]$.

\end{itemize}
\end{corollary}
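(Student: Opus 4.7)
The plan is to deduce Corollary~\ref{cor:gamma} directly from Corollary~\ref{cor:local-VE} via an identification of the polynomials $\ell_{\fF,n,k}(x)$ with the ``local $h$-polynomials relative to $E$'' of Definition~\ref{def:local-VE}. First I would observe that, for a uniform triangulation $\Gamma$ of $2^V$ with $|V| = n$ and associated $f$-triangle $\fF$, the polynomial $\ell_{V,E}(\Gamma, x)$ depends only on $n$ and on $k := |V \sm E|$, and in fact coincides with $\ell_{\fF, n, k}(x)$. Indeed, grouping the sum in~(\ref{eq:local-VE-def}) by $|F|$ and using $h(\Gamma_F, x) = h_\fF(\sigma_{|F|}, x)$ gives an alternating sum which, after the substitution $i = n - |F|$ and a routine comparison, matches the closed formula
\[
\ell_{\fF, n, k}(x) \ = \ \sum_{i=0}^k (-1)^i \binom{k}{i} h_\fF(\sigma_{n-i}, x)
\]
of Proposition~\ref{prop:geom}~(b). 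Thus $\ell_{V,E}(\Gamma, x) = \ell_{\fF, n, k}(x)$ whenever $|V \sm E| = k$.

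With this dictionary in hand, I would invoke Corollary~\ref{cor:local-VE}, which asserts that $x^n \ell_{V,E}(\Gamma, 1/x) = \iI_n(\ell_{V,E}(\Gamma, x))$ admits a unimodal (respectively, $\gamma$-positive) symmetric decomposition with respect to $n$ for every $E \subseteq V$. Translated through the dictionary, this says that $\iI_n(\ell_{\fF, n, k}(x))$ has such a decomposition for each $k \in \{0, 1, \dots, n\}$. For a general $p(x) = \sum_{k=0}^n c_k \, x^k (1+x)^{n-k} \in \pP_n[x]$ with $c_k \ge 0$, linearity of $\lL_\fF$ and of $\iI_n$ yields
\[
\iI_n (\lL_\fF(p(x))) \ = \ \sum_{k=0}^n c_k \, \iI_n(\ell_{\fF, n, k}(x)),
\]
and the conclusion follows because the symmetric decomposition of a polynomial with respect to $n$ is unique and additive, and because nonnegative linear combinations of unimodal (respectively, $\gamma$-positive) polynomials sharing a common center of symmetry retain the same property.

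I do not foresee any substantive obstacle. Once one recognizes that $\ell_{V,E}(\Gamma, x)$ in the uniform setting is nothing other than $\ell_{\fF, n, k}(x)$, both parts of the corollary are formal consequences of Corollary~\ref{cor:local-VE}. The genuine combinatorial work has already been carried out in Proposition~\ref{prop:dnk}~(b) and Proposition~\ref{prop:local-VE}, which together underlie Corollary~\ref{cor:local-VE} itself; the role of the present corollary is simply to repackage that information as a statement about the linear transformation $\lL_\fF$ applied to the cone $\pP_n[x]$.
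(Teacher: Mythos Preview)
Your proposal is correct and follows essentially the same route as the paper: identify $\ell_{V,E}(\Gamma,x)$ with $\ell_{\fF,n,k}(x)$ for $k=|V\sm E|$ via the explicit formula of Proposition~\ref{prop:geom}\,(b), invoke Corollary~\ref{cor:local-VE} to obtain the desired symmetric decomposition of each $\iI_n(\ell_{\fF,n,k}(x))$, and conclude by taking nonnegative linear combinations. Your justification of the linearity step (uniqueness and additivity of the symmetric decomposition, and preservation of unimodality/$\gamma$-positivity under nonnegative combinations with a common center) is in fact more explicit than the paper's own wording.
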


\begin{proof}
By Proposition~\ref{prop:geom}, for a uniform 
triangulation $\Gamma$ we have $\ell_{\fF,n,k}(x) 
= \ell_{V,E}(\Gamma,x)$ for every $(n-k)$-element
subset $E$ of $V$. Thus, $x^n \ell_{\fF,n,k}(1/x)$ 
is unimodal (respectively, $\gamma$-positive) for 
every $k \in \{0, 1,\dots,n\}$ by 
Corollary~\ref{cor:local-VE} and therefore so is 
\[ \iI_n (\lL_\fF(p(x))) = \sum_{k=0}^n c_k \, 
   \iI_n \, \lL_\fF \left( x^{n-k}(1+x)^k \right) 
	 = \sum_{k=0}^n c_k x^n \ell_{\fF,n,n-k}(1/x) \]
for every polynomial $p(x) = \sum_{k=0}^n c_k 
x^{n-k}(1+x)^k$ with $c_0, c_1,\dots,c_n \ge 0$.
\end{proof}

\begin{proof}[Proof of Theorem~\ref{thm:gamma}.] 
Similarly, it suffices 
to show that $\hH_\fF^\circ \left( x^{n-k}(1+x)^k 
\right)$ has a unimodal (respectively, 
$\gamma$-positive) symmetric decomposition with 
respect to $n$ for every $k \in \{0, 1\dots, n\}$. 
Indeed, 
\[ \hH_\fF^\circ \left( x^{n-k}(1+x)^k \right) = 
\sum_{i=0}^k {k \choose i} \hH_\fF^\circ (x^{n-i})
= \sum_{i=0}^k {k \choose i} h_\fF^\circ 
(\sigma_{n-i},x) \]
and the result follows from 
Corollary~\ref{cor:local-VE}.
\end{proof}

\section{On the real-rootedness of the $\hH_\fF^\circ$ 
         and $\lL_\fF$ transformations}
\label{sec:roots}

This section discusses possible generalizations
and analogues of \cite[Conjecture~1]{BJ22} in the 
framework of uniform triangulations. 
Let $\Gamma$ be a uniform triangulation of an 
$(n-1)$-dimensional simplex $\sigma_n$, with 
associated $f$-triangle $\fF$. Following 
\cite{Ath22, AT21}, we say that $\Gamma$ has the 
\emph{strong interlacing property} if

\begin{itemize}
\itemsep=0pt
\item[{\rm (i)}]
$h_\fF(\sigma_m, x)$ is real-rooted  for all 
$2 \le m < n$, 

\item[{\rm (ii)}]
$\theta_\fF(\sigma_m, x)$ is either identically
zero, or a real-rooted polynomial of degree $m-1$ 
with nonnegative coefficients which is interlaced 
by $h_\fF(\sigma_{m-1}, x)$,  for all $2 \le m \le 
n$.
\end{itemize}
These conditions imply strong real-rootedness 
properties for the $h$-polynomials of $\fF$-uniform 
triangulations of simplicial complexes and their 
symmetric decompositions \cite[Section~6]{Ath22} 
\cite[Section~4]{AT21}.

We recall that the polynomials $q_{\fF,n,k}(x)$ 
and $\ell_{\fF,n,k}(x)$ have been defined in 
Section~\ref{sec:geom} for $k \in 
\{0, 1,\dots,n\}$. By linearity of $\hH_\fF^\circ$
and $\lL_\fF$ we have 
\begin{align*} 
\iI_n (\hH_\fF^\circ (p(x))) & = \sum_{k=0}^n 
        c_k q_{\fF,n,k}(x) \\ 
\lL_\fF (p(x))) & = \sum_{k=0}^n 
        c_k \ell_{\fF,n,n-k}(x)
\end{align*}
for every polynomial $p(x) = \sum_{k=0}^n c_k 
x^{n-k} (1+x)^k$. Thus, the last claim in each 
part of the following conjecture is a consequence 
of the first.
\begin{conjecture} \label{conj:main} 
Let $\Gamma$ be a uniform triangulation of the 
$(n-1)$-dimensional simplex $2^V$ with associated 
$f$-triangle $\fF$. 
\begin{itemize}
\itemsep=0pt
\item[(a)] 
If $\Gamma$ has the strong interlacing property, 
then $(q_{\fF,n,k}(x))_{0 \le k \le n}$
is an interlacing sequence of real-rooted 
polynomials. In particular, $h(\Delta_\aA
(\Gamma),x)$ is real-rooted and so is 
$\hH_\fF^\circ(p(x))$ for every 
$p(x) \in \pP_n[x]$.

\item[(b)] 
If $\Gamma$ has the strong interlacing property, 
then $(\ell_{\fF,n,k}(x))_{0 \le k \le n}$
is an interlacing sequence of real-rooted 
polynomials. In particular, $\ell_V(\Gamma,x)$ 
is real-rooted and so is $\lL_\fF(p(x))$ for 
every $p(x) \in \pP_n[x]$.

\end{itemize}
\end{conjecture}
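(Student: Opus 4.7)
The plan is to extend the inductive proof of Theorem~\ref{thm:main} to this broader setting by introducing the correct refined auxiliary polynomials. In the Eulerian case, the proof relied on polynomials $q_{n,k,j}(x)$ indexed by an extra parameter $j$ (the position of $1$ in a permutation), together with the recurrences of Proposition~\ref{prop:qnmk} and the interlacing of the sequence $(p_{n,k}(x))_{0 \le k \le n}$. For a general uniform triangulation $\Gamma$ with $f$-triangle $\fF$, the first task is to define analogous polynomials $q_{\fF,n,k,j}(x)$ (for part~(a)) and $\ell_{\fF,n,k,j}(x)$ (for part~(b)) that reduce to $q_{n,k,j}(x)$ and to $d_{n,k,j}(x)$ (suitably defined) when $\Gamma$ is the barycentric subdivision. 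A natural candidate exploits Proposition~\ref{prop:geom}(c): since $q_{\fF,n,k}(x)$ is the $h$-polynomial of the induced subcomplex of the antiprism sphere $\Delta_\aA(\Gamma)$ obtained by deleting the new vertices $u_{k+1}, \dots, u_n$, one would refine this invariant by deletion/link operations at a distinguished vertex.

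The second step is to prove the analogues of the recurrences in Proposition~\ref{prop:qnmk}, in particular the key identities of parts~(e) and~(f) that drive the induction. The uniform structure of $\fF$ and the recurrences of Proposition~\ref{prop:geom}(a) should provide the $k$-recurrence, while a vertex-deletion identity on $\Delta_\aA(\Gamma)$ should yield the $j$-recurrence. Condition~(ii) of the strong interlacing property, namely that $\theta_\fF(\sigma_m,x)$ is real-rooted and interlaced by $h_\fF(\sigma_{m-1},x)$, would then play the role of the Frobenius/Brenti interlacings among the $A_n(x)$ and $p_{n,k}(x)$ that underlie the base cases in Section~\ref{sec:euler}.

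The third step is a double induction on $(n,k)$, mirroring the proof of Theorem~\ref{thm:main}: Lemma~\ref{lem:interlace-rec}(b) produces the interlacing of the $q_{\fF,n,k,j}^\ast$ sequences in the two ranges $0 \le j \le k-1$ and $k \le j \le n$, and Lemma~\ref{lem:interlace-trans} is applied at $j=k-1, k$ to splice these two blocks into a single interlacing sequence $(q_{\fF,n,k,j}(x))_{0 \le j \le n}$. Summing over $j$ via the analogue of Proposition~\ref{prop:qnmk}(b) yields that $(q_{\fF,n,k}(x))_{0 \le k \le n}$ is itself an interlacing sequence of real-rooted polynomials; specializing at $k=n$ gives the real-rootedness of $h(\Delta_\aA(\Gamma),x)$, and Lemma~\ref{lem:interlace-rec}(a) gives real-rootedness of $\hH_\fF^\circ(p(x))$. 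Part~(b) would be handled by the parallel derangement-type argument; here Proposition~\ref{prop:local-VE} expressing $\ell_{V,E}(\Gamma,x)$ as $\sum_F \theta(\Gamma_F,x)\,d_{|V\sm F|,|V\sm(E\cup F)|}(x)$, combined with the interlacing structure of the $d_{n,k}(x)$ from Proposition~\ref{prop:dnk} and \cite[Theorem~3.6]{BS21}, is a promising alternative route that feeds the $\theta$-interlacing hypothesis directly.

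The main obstacle will be Step~1: identifying the correct refinement $q_{\fF,n,k,j}(x)$ in the absence of a canonical ``position of $1$'' statistic. The refinement must be geometric enough to be defined uniformly across all theta-$\gamma$-positive uniform triangulations, yet rigid enough to produce the exact analogues of the seven identities of Proposition~\ref{prop:qnmk} that make the induction go through. A related difficulty is that Proposition~\ref{prop:qnmk}(g), which gives $q^\ast_{n,k,n}(x) = xq_{n,k-1}(x)$ and is essential for establishing $q^\ast_{n,k,0} \preceq q^\ast_{n,k,n}$, uses a very specific involution on fixed points; its analogue will likely require a Stanley-type locality argument on the restriction of $\Gamma$ to codimension-one faces, and it is here that I expect the strong interlacing property to enter in an essential and possibly subtle way.
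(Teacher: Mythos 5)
This statement is Conjecture~\ref{conj:main}, which the paper presents as an open problem with computational evidence rather than a theorem, so there is no proof in the paper to compare against; the paper explicitly says only that part~(a) has been proven by the author for edgewise subdivisions (with the proof to appear elsewhere) and that both parts have been verified numerically in a few other cases. Your proposal is therefore correctly characterized as a plan rather than a proof, and you are right to flag Step~1 as the central missing piece.

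That said, there are two points worth sharpening in your plan. First, for part~(a), the refinement $q_{n,k,j}(x)$ in the barycentric case is fundamentally permutation-theoretic: $j$ records the position of $1$ in $w^{-1}$, and crucially the base case $k \le 1$ collapses to the sequence $(p_{n,j}(x))_{0\le j\le n}$, whose interlacing depends on the Frobenius--Brenti-type recurrence~(\ref{eq:pnk-rec}). For a general uniform triangulation there is no analogous ``smallest vertex'' statistic, and the strong interlacing property (conditions (i)--(ii) on $h_\fF(\sigma_m,x)$ and $\theta_\fF(\sigma_m,x)$) is a hypothesis on two polynomials per dimension, not on an $(n{+}1)$-term interlacing sequence indexed by an auxiliary parameter $j$; so even the \emph{existence} of the base-case interlacing sequence is part of what has to be constructed, not merely a hypothesis to plug in. Your ``deletion/link at a distinguished vertex of $\Delta_\aA(\Gamma)$'' idea is geometric and plausible, but you would need to verify that the resulting $h$-polynomials of balls actually satisfy the analogues of Proposition~\ref{prop:qnmk}(e)--(g); in particular~(g), $q^\ast_{n,k,n}(x)=xq_{n,k-1}(x)$, encodes a very rigid involution-style identity on fixed points and it is not clear the geometric refinement reproduces it. Second, for part~(b), the route via Proposition~\ref{prop:local-VE} (the formula $\ell_{V,E}(\Gamma,x)=\sum_F\theta(\Gamma_F,x)\,d_{|V\sm F|,|V\sm(E\cup F)|}(x)$) is more problematic than you suggest: it writes $\ell_{\fF,n,k}(x)$ as a \emph{sum of products} of real-rooted polynomials, and neither real-rootedness nor interlacing is preserved under such sums in general; the strong interlacing hypothesis gives interlacing between $\theta_\fF(\sigma_m,x)$ and $h_\fF(\sigma_{m-1},x)$, not the much stronger compatibility between the $\theta$-factors and the $d_{\cdot,\cdot}$-factors that one would need to apply Lemma~\ref{lem:interlace-rec}(a) to that expansion. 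Some form of this tension is presumably exactly why the conjecture remains open.
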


Conjecture~\ref{conj:main} unifies several results
and conjectures in the literature. For instance, 
the important special case of barycentric 
subdivision of Conjecture~\ref{conj:main} holds by 
Theorem~\ref{thm:main} and \cite[Theorem~3.6]{BS21}. 
The real-rootedness of $\ell_V(\Gamma,x)$ was 
proven for the $r$-colored barycentric subdivision 
in \cite[Section~3.3]{BS21} \cite[Section~5]{GS20} 
and was conjectured for the antiprism triangulation 
in \cite[Conjecture~5.8]{ABK22}. 
Among other instances, part (a) has been proven by 
the author for edgewise subdivisions (the proof will
appear elsewhere) and has been verified 
computationally for the interval triangulation 
for $n \le 10$ and for the antiprism triangulation 
for $n \le 15$. Part (b) has been verified 
for the interval and antiprism triangulations for 
$n \le 30$. 

\begin{remark} \label{rem:conj-main} \rm 
We conclude with some remarks on 
Conjecture~\ref{conj:main}.

(a) By replacing $h^\circ_\fF (\sigma_m,x)$
with $h_\fF (\sigma_m,x)$ in the definition of the 
$\hH^\circ_\fF$ transformation one gets the linear 
operator $\hH_\fF: \RR_n[x] \to \RR_n[x]$ defined 
by setting
\begin{equation}  
\hH_\fF(x^m) = h_\fF(\sigma_m,x)  
\label{eq:def-H}
\end{equation} 
for $m \in \{0, 1,\dots,n\}$. In the special case 
of barycentric subdivision one gets the linear 
transformation $\aA: \RR[x] \to \RR[x]$ for which 
$\aA(x^n) = A_n(x)$ for every $n \in \NN$. 
Computations suggest that 
Conjecture~\ref{conj:main} may still hold when 
the $\hH^\circ_\fF$ transformation is replaced 
by $\hH_\fF$.

(b) One may define the type $B$ analogues $\bB, 
\dD^B: \RR_n[x] \to \RR_n[x]$ of the Eulerian 
and derangement transformations, respectively, 
by setting $\bB(x^n) = B_n(x)$ and 
\[ \dD^B(x^n) = \sum_{i=0}^n \, (-1)^i 
   {n \choose i} B_{n-i}(x) \]
for $n \in \NN$, where $B_n(x)$ is the 
standard $n$th Eulerian polynomial of type $B$ 
\cite[Section~11.4]{Pet15}. Then, computational 
evidence again suggests that the polynomials 
$\bB(p(x))$ and $\dD^B(p(x))$ have only real 
roots for every $p(x) \in \pP[x]$. 

(c) More generally, given any sequence 
$(h_n(x))_{n \in \NN}$ of polynomials with 
nonnegative coefficients, one may define 
polynomials $h_{n,k}(x)$ and $\ell_{n,k}(x)$ 
for $k \in \{0, 1,\dots,n\}$ recursively by 
setting
\begin{align*} 
h_{n,k+1}(x) & = h_{n,k}(x) + xh_{n-1,k}(x) \\ 
\ell_{n,k+1}(x) & = \ell_{n,k}(x) - \ell_{n-1,k}(x)
\end{align*}
for $0 \le k < n$, or explicitly, by setting
\begin{align*} 
  h_{n,k}(x) & = \sum_{i=0}^k {k \choose i} x^i 
                    h_{n-i}(x) \\
  \ell_{n,k}(x) & = \sum_{i=0}^k (-1)^i  
                    {k \choose i} h_{n-i}(x) 							
\end{align*}						
for $k \in \{0, 1,\dots,n\}$. Under what conditions 
on the $h_n(x)$ are $(h_{n,k}(x))_{0 \le k \le n}$ 
and $(\ell_{n,k}(x))_{0 \le k \le n}$ interlacing 
sequences of real-rooted polynomials for every 
$n \in \NN$? 

Since $h_{n,0}(x) = h_n(x)$ and $h_{n,1}(x) = 
h_{n,0}(x) + x h_{n-1,0}(x)$, a necessary condition 
is that each $h_n(x)$ is real-rooted and it is 
reasonable to assume that $h_{n-1}(x)$ interlaces 
$h_n(x)$ for every $n \ge 1$. The example $h_n(x) 
= (1+x)^n$ shows that this condition is not 
sufficient even if each $h_n(x)$ is assumed to 
be symmetric of degree $n$ and center of symmetry
$n/2$. Indeed, if $h_n(x) = (1+x)^n$ for every 
$n \in \NN$, then $h_{n,k}(x) = 
(1+x)^{n-k} (1+2x)^k$ and $\ell_{n,k}(x) = x^k
(1+x)^{n-k}$ for all $n, k$ and the sequences 
$(h_{2,0}(x), h_{2,1}(x), h_{2,2}(x))$ and 
$(\ell_{2,0}(x), \ell_{2,1}(x), \ell_{2,2}(x))$ 
already fail to be interlacing. We note that 
$h_n(x) = (1+x)^n$ can be expressed as the 
$h$-polynomial of a \emph{nonuniform} 
triangulation of the $n$-dimensional simplex.

\end{remark}

\medskip
\noindent 
\textbf{Acknowledgments}.
The author wishes to thank Katerina
Kalampogia-Evangelinou for her help with all 
computations claimed in this article.


\begin{thebibliography}{99}
%
\bibitem{Ad18}
K.~Adiprasito,
\textit{Combinatorial Lefschetz theorems beyond 
positivity},
{\tt arXiv:1812.10454}.
%
\bibitem{APP21}
K.~Adiprasito, S.A.~Papadakis and V.~Petrotou,
\textit{Anisotropy, biased pairings and the Lefschetz
property for pseudomanifolds and cycles},
{\tt arXiv:2101.07245}.
%
\bibitem{AY20}
K.~Adiprasito and G.~Yashfe,
\textit{The Partition Complex: an invitation to 
combinatorial commutative algebra},
in \textit{Surveys in Combinatorics 2021}, 
London Math. Soc. Lecture Notes Ser. {\bf~470}, 
Cambridge University Press, 
Cambridge, 2021, pp.~1--41.
%
\bibitem{Ath12}
C.A.~Athanasiadis,
\textit{Flag subdivisions and $\gamma$-vectors},
Pacific J. Math. {\bf~259} (2012), 257--278.
%
\bibitem{Ath18}
C.A.~Athanasiadis,
\textit{Gamma-positivity in combinatorics and geometry},
S\'em. Lothar. Combin. {\bf~77} (2018), Article B77i, 
64pp (electronic).
%
\bibitem{Ath20}
C.A.~Athanasiadis,
\textit{Binomial Eulerian polynomials for colored 
permutations},
J. Combin. Theory Series A {\bf~173} (2020), 
Article 105214, 38pp.
%
\bibitem{Ath22}
C.A.~Athanasiadis, 
\textit{Face numbers of uniform triangulations of 
simplicial complexes},
Int. Math. Res. Notices {\bf~2022} (2022), 15756--15787.
%
\bibitem{Ath22+}
C.A.~Athanasiadis, 
\textit{Triangulations of simplicial complexes and
theta polynomials},
{\tt arXiv:2209.01674}.
%
\bibitem{ABK22}
C.A.~Athanasiadis, J-M.~Brunink and M.~Juhnke-Kubitzke,
\textit{Combinatorics of antiprism triangulations},
Discrete Comput. Geom. {\bf~68} (2022), 72--106.
%
\bibitem{AS12}
C.A.~Athanasiadis and C.~Savvidou,
\textit{The local $h$-vector of the cluster subdivision 
of a simplex},
S\'em. Lothar. Combin. {\bf~66} (2012), Article B66c, 
21pp (electronic).
%
\bibitem{AT21}
C.A.~Athanasiadis and E.~Tzanaki,
\textit{Symmetric decompositions, triangulations and 
real-rootedness},
Mathematika {\bf~67} (2021), 840--859.
%
\bibitem{BJM19}
M.~Beck, K.~Jochemko and E.~McCullough,
\textit{$h^*$-polynomials of zonotopes},
Trans. Amer. Math. Soc. {\bf~371} (2019), 2021--2042.
%
\bibitem{Bj95}
A.~Bj\"orner,
\textit{Topological methods}, 
in \textit{Handbook of combinatorics}
(R.L.~Graham, M.~Gr\"otschel and L.~Lov\'asz, eds.),
North Holland, Amsterdam, 1995, pp.~1819--1872.
%
%
\bibitem{Bra15}
P.~Br\"and\'en,
\textit{Unimodality, log-concavity, real-rootedness 
and beyond},
in \textit{Handbook of Combinatorics} (M.~Bona, ed.), 
CRC Press, 2015, pp.~437--483.
%
\bibitem{BJ22}
P.~Br\"and\'en and K.~Jochemko,
\textit{The Eulerian transformation},
Trans. Amer. Math. Soc. {\bf~375} (2022), 1917--1931.
%
\bibitem{BS21}
P.~Br\"and\'en and L.~Solus,
\textit{Symmetric decompositions and real-rootedness},
Int. Math. Res. Not. {\bf~2021} (2021), 7764--7798.
%
\bibitem{Bre89}
F.~Brenti,
\textit{Unimodal, log-concave and P\'olya frequency 
sequences in combinatorics},
Mem. Amer. Math. Soc. {\bf~81} (1989), no. 413, 
pp.~viii+106.
%
\bibitem{BW08} 
F.~Brenti and V.~Welker, 
\textit{$f$-vectors of barycentric subdivisions}, 
Math. Z. {\bf~259} (2008), 849--865.
%
\bibitem{CGSW07}
S.~Corteel, I.M.~Gessel, C.D.~Savage and H.S.~Wilf,
\textit{The joint distribution of descent and major
index over restricted sets of permutations},
Ann. Comb. {\bf~11} (2007), 375--386.
%
\bibitem{Fi06}
S.~Fisk,
\textit{Polynomials, roots, and interlacing},
{\tt arXiv:0612833}.
%
\bibitem{FSc70}
D.~Foata and M.-P.~Sch\"utzenberger,
Th\'eorie G\'eometrique des Polyn\^omes Eul\'eriens,
Lecture Notes in Mathematics {\bf~138}, 
Springer-Verlag, 1970.
%
\bibitem{FS74}
D.~Foata and V.~Strehl,
\textit{Rearrangements of the symmetric group and 
enumerative properties of the tangent and secant 
numbers},
Math. Z. {\bf~137} (1974), 257--264.
%
\bibitem{FS76}
D.~Foata and V.~Strehl,
\textit{Euler numbers and variations of permutations},
in \textit{Colloquio Internazionale sulle Teorie
Combinatorie (Roma, 1973), Tomo I},
Atti dei Convegni Lincei, No. 17, 
Accad. Naz. Lincei, Rome, 1976, pp.~119--131.
%
\bibitem{Fr10}
G.~Frobenius,
\textit{\"Uber die Bernoullischen Zahlen und die 
Eulerschen Polynome},
Berl. Ber. {\bf~1910} (1910), 809--847.
%
\bibitem{Ga05}
S.R.~Gal,
\textit{Real root conjecture fails for five- and 
higher-dimensional spheres},
Discrete Comput. Geom. {\bf~34} (2005), 269--284.
%
\bibitem{GS20} 
N.~Gustafsson and L.~Solus, 
\textit{Derangements, Ehrhart theory and 
local $h$-polynomials}, 
Adv. Math {\bf~369} (2020), Article 107169.
%
\bibitem{HZ19}
J.~Haglund and P.B.~Zhang,
\textit{Real-rootedness of variations of Eulerian 
polynomials},
Adv. in Appl. Math. {\bf~109} (2019), 38--54.
%
\bibitem{HiAC}
T.~Hibi,
Algebraic Combinatorics on Convex Polytopes,
Carslaw Publications, Australia, 1992.
%
\bibitem{KMS19}
M.~Juhnke-Kubitzke, S.~Murai and R.~Sieg,
\textit{Local $h$-vectors of quasi-geometric and 
barycentric subdivisions},
Discrete Comput. Geom. {\bf~61} (2019), 364--379.
%
\bibitem{PP20}
S.A.~Papadakis and V.~Petrotou,
\textit{The characteristic 2 anisotropicity of 
simplicial spheres},
{\tt arXiv:2012.09815}.
%
\bibitem{Pet15}
T.K.~Petersen,
Eulerian Numbers,
Birkh\"auser Advanced Texts, Birkh\"auser, 2015.
%
\bibitem{PRW08}
A.~Postnikov, V.~Reiner and L.~Williams,
\textit{Faces of generalized permutohedra},
Doc. Math. {\bf~13} (2008), 207--273.
%
\bibitem{SW20}
J.~Shareshian and M.L.~Wachs,
\textit{Gamma-positivity of variations of 
Eulerian polynomials},
J. Comb. {\bf~11} (2020), 1--33.
%
\bibitem{Sta87}
R.P.~Stanley,
\textit{Generalized $h$-vectors, intersection 
cohomology of toric varieties and related results},
in \textit{Commutative Algebra and Combinatorics}
(N.~Nagata and H.~Matsumura, eds.),
Adv. Stud. Pure Math. {\bf~11}, Kinokuniya, 
Tokyo and North-Holland,
Amsterdam-New York, 1987, pp.~187--213.
%
\bibitem{Sta89}
R.P.~Stanley,
\textit{Log-concave and unimodal sequences in algebra, 
combinatorics, and geometry},
in \textit{Graph Theory and its Applications: East 
and West}, Annals of the New York Academy of Sciences 
{\bf~576}, New York Acad. Sci., New York, 1989, 
pp.~500--535.
%
\bibitem{Sta92}
R.P.~Stanley,
\textit{Subdivisions and local $h$-vectors},
J. Amer. Math. Soc. {\bf~5} (1992), 805--851.
%
\bibitem{StaCCA}
R.P.~Stanley,
Combinatorics and Commutative Algebra,
second edition, Birkh\"auser, Basel, 1996.
%
\bibitem{StaEC1}
R.P.~Stanley,
Enumerative Combinatorics, vol.~1,
Cambridge Studies in Advanced Mathematics {\bf~49},
Cambridge University Press, second edition, 
Cambridge, 2011.
%
\bibitem{Wa92}
D.G.~Wagner,
\textit{Total positivity of Hadamard products},
J. Math. Anal. Appl. {\bf~163} (1992), 459--483.
%
\end{thebibliography}
\end{document}